\documentclass[11pt]{article}%
\usepackage[a4paper,left=1.8cm,right=1.8cm,top=2cm]{geometry}
\usepackage[english]{babel}
\usepackage{amsfonts, amsmath, amssymb,amscd}
\usepackage[all]{xy}
\usepackage{latexsym}
\usepackage{amsmath, amsthm}
\usepackage{amsfonts}
\usepackage{amssymb}
\usepackage{amsmath}
\usepackage{graphicx,color}
\setcounter{MaxMatrixCols}{30}
\newtheorem{theorem}{Theorem}[section]
\newtheorem{lemma}[theorem]{Lemma}
\newtheorem{claim}{Claim}
\newtheorem{proposition}[theorem]{Proposition}
\newtheorem{definition}[theorem]{Definition}

\newtheorem{remark}[theorem]{Remark}

\newcommand{\R}{\mathbb{R}}
\newcommand{\Hi}{\mathbb{H}}

\newcommand{\arcsinh}{{\rm arcsinh}}

\definecolor{mycolor1}{rgb}{0.3,0.3,1}

\definecolor{mycolor2}{rgb}{0.7,0.3,0}

\usepackage{verbatim}

\begin{document}


\title{Bounded $\lambda$-harmonic functions in domains of $\mathbb{H}^n$ with asymptotic boundary with fractional dimension}
\author{Leonardo Prange Bonorino
\and Patr\'{\i}cia Kruse Klaser  }

\maketitle

\begin{abstract}
The existence and nonexistence of $\lambda$-harmonic functions in unbounded domains of $\mathbb{H}^n$ are investigated. We prove that if the $(n-1)/2$ Hausdorff measure  of the asymptotic boundary of a domain $\Omega$ is zero, then there is no bounded $\lambda$-harmonic function of $\Omega$ for $\lambda \in [0,\lambda_1(\mathbb{H}^n)]$, where $\lambda_1(\mathbb{H}^n)=(n-1)^2/4$.  
For these domains, we have comparison principle and some maximum principle. Conversely, for any $s>(n-1)/2,$ we prove the existence of domains with asymptotic boundary of dimension $s$ for which there are bounded $\lambda_1$-harmonic functions that decay exponentially at infinity. \end{abstract}

\section{Introduction}\label{sec-intro}

Let $\mathbb{H}^n$ be the hyperbolic space and let $\Omega$ be a domain (open connected set) in $\mathbb{H}^n$. 
For $\Omega \ne \mathbb{H}^n$, we say that a nontrivial $u$ is a $\lambda$-harmonic function of $\Omega$ if $u\in C(\overline{\Omega}) \cap C^2(\Omega)$ satisfies
\begin{equation}
 \left\{ \begin{array}{rcl}  -\Delta u & = & \lambda u \quad {\rm in } \quad \Omega \\[5pt]
                                  u & = & 0 \quad {\rm on } \quad \partial \Omega. \end{array} \right.
\label{definitionOfLambdaHarmonic}
\end{equation}
In the case $\Omega=\mathbb{H}^n$ we only require that $u\not\equiv 0$ satisfies the equation above. 
Remind that this is a classical eigenvalue problem when $\Omega$ is a bounded domain. In this case $u$ is an eigenfunction associated to the eigenvalue $\lambda$
and, from the Spectral Theory, the set of eigenvalues is discrete and it is the spectrum of $-\Delta:H_0^1(\Omega) \to H_0^1(\Omega),$ where $H_0^1(\Omega)$
is the Sobolev space that is the closure of $C_0^\infty(\Omega)$ with the $L^2$ norm of the gradient.
However the situation is different for unbounded domains and this characterization of the spectrum does not hold. Indeed, there exists $\lambda$ for which problem \eqref{definitionOfLambdaHarmonic} has nontrivial solution but still $\lambda$ is neither an eigenvalue nor an element of the essential spectrum.

Even then, several results 
are known. For instance, if $\Omega$ is the whole $\mathbb{H}^n$, a class of $\lambda$-harmonic functions is obtained in \cite{H}, \cite{M} using some Poisson integral representation. 
This representation is used in \cite{GO} to characterize the bounded $\lambda$-harmonic functions for any $\lambda \in \mathbb{C}$.

For a Hadamard manifold $M$ with sectional curvature bounded from above and below by negative constants, Ancona proved in \cite{AA} the existence of bounded $\lambda$-harmonic functions in $M$, that converge to zero at infinity with exponential rate, for any $\lambda \in (0,\lambda_1)$, where $\lambda_1$ is the first eigenvalue of $M$ defined by
$$ \lambda_1(M) = \inf_{v \in H^1(M)}  \frac{\displaystyle \int_M |\nabla v |^2 dx}{\displaystyle \int_M |v|^2 dx.} $$
On the other hand, he also exhibited examples of manifolds for which there is no positive $\lambda_1$-harmonic function 
 that converges to $0$ at the asymptotic boundary. This illustrates the importance of the first eigenvalue in the behavior of the $\lambda$-harmonic functions in general manifolds. The asymptotic boundary with the cone topology \cite{EO} also plays an important role in the estimates needed to the main results of \cite{AA1} and \cite{AA}.

The purpose of this work is to study the existence of bounded $\lambda$-harmonic functions in unbounded domains
of $\mathbb{H}^n$ for $\lambda \le \lambda_1(\mathbb{H}^n)$, where the first eigenvalue of $\mathbb{H}^n$ is evaluated by McKean in \cite{Mk}:
$$ \lambda_1(\mathbb{H}^n)=\frac{(n-1)^2}{4}.$$
We show that if the asymptotic boundary boundary is not ``large enough'', the problem has no bounded solution.
This extends our knowledge that for bounded domains (or domains with empty asymptotic boundary) there are no $\lambda$-harmonic functions, if $\lambda \le \lambda_1.$ One of our main results is the following theorem, that is not restricted to functions that converge to zero at the asymptotic boundary:

\begin{theorem}
Let $\Omega \subset \mathbb{H}^n$ such that the $(n-1)/2$ Hausdorff dimensional measure of $\partial_{\infty} \Omega,$ $H^{(n-1)/2}(\partial_{\infty} \Omega)$ is zero. Then there is no bounded $\lambda$-harmonic function that
vanishes on $\partial \Omega$ for $\lambda \in [0,\lambda_1]$. 
\label{Theo_NoBoundedSolutionForNullHausdorff}
\end{theorem}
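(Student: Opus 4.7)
The plan is a Phragmén-Lindelöf barrier argument. Suppose, for contradiction, that $u$ is a nontrivial bounded $\lambda$-harmonic function in $\Omega$ with $u = 0$ on $\partial\Omega$, and set $M := \sup_\Omega|u| > 0$. Fix a reference point $x_0 \in \Omega$. The goal is to show that $|u(x_0)| < \epsilon M$ for every $\epsilon > 0$, which together with the corresponding statement for $-u$ (which is also $\lambda$-harmonic) forces $u(x_0) = 0$; since $x_0$ is arbitrary, $u \equiv 0$, a contradiction.

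To produce such a bound, I would use the hypothesis $H^{(n-1)/2}(\partial_\infty \Omega) = 0$ to cover $\partial_\infty \Omega \subset S^{n-1}$ by countably many Euclidean balls $B_j = B_{r_j}(\xi_j)$ with $\sum_j r_j^{(n-1)/2} < \epsilon$, and then build a nonnegative $\lambda$-super-harmonic function $v$ on $\mathbb{H}^n$ of the form $v = M\sum_j v_j$, where each $v_j$ is a local barrier adapted to $B_j$ such that
\begin{itemize}
\item[(a)] $\liminf_{x \to \eta} v_j(x) \ge 1$ for every $\eta \in \overline{B_j}$ (in the cone topology),
\item[(b)] $v_j(x_0) \le C\, r_j^{(n-1)/2}$.
\end{itemize}
The critical exponent $(n-1)/2$ matches the exponent $s_+ = (n-1)/2 + \sqrt{\lambda_1 - \lambda} \ge (n-1)/2$ in the Poisson-type $\lambda$-harmonic kernel $P(x,\xi)^{s_+} = ((1-|x|^2)/|x-\xi|^2)^{s_+}$, which is the source of the sharp scaling. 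The summability assumption then yields $v(x_0) \le CM\epsilon$, while property (a) gives $v \ge M$ in a relative neighborhood of $\partial_\infty \Omega$ in $\overline{\mathbb{H}^n}$.

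With $v$ in hand, the comparison proceeds on the bounded subdomains $\Omega_R = \Omega \cap B_R(x_0)$ (hyperbolic balls). The key analytic input is that $-\Delta - \lambda$ satisfies the maximum principle on each $\Omega_R$: since $\lambda_1(\mathbb{H}^n) = \lambda_1$ is at the bottom of the continuous spectrum and is not attained, any bounded proper subdomain $\Omega_R$ has $\lambda_1(\Omega_R) > \lambda_1 \ge \lambda$. On $\partial\Omega_R$ one has $u = 0 \le v$ on $\partial\Omega \cap \overline{B_R}$, and by lower semicontinuity of $v$ together with the compactness of $\partial_\infty\Omega$, $v \ge M \ge u$ on $\partial B_R \cap \overline\Omega$ once $R$ is close enough to the asymptotic boundary. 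Comparison then gives $u \le v$ on $\Omega_R$; letting $R \to \infty$ yields $u(x_0) \le v(x_0) \le CM\epsilon$.

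The principal obstacle is the construction of the local barriers $v_j$. A single positive $\lambda$-harmonic function on $\mathbb{H}^n$ blows up at only one boundary point (its Martin singularity), so one cannot take $v_j$ to be $\lambda$-harmonic; instead $v_j$ must be genuinely $\lambda$-super-harmonic, large on an entire cap-shadow while its normalization at $x_0$ is exactly of order $r_j^{(n-1)/2}$. Making this construction compatible with (a) and (b) simultaneously for the critical exponent $(n-1)/2$ — and doing so uniformly for $\lambda$ up to and including $\lambda_1$, where the $\lambda_1$-Green's function no longer exists — is where the proof needs the delicate estimates on $P^{s_+}$ in a neighborhood of $\xi_j$ at Euclidean scale $r_j$, and is the technical heart of the argument.
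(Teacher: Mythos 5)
Your skeleton coincides with the paper's: cover $\partial_\infty\Omega$ by caps of angular radius $\theta_j$ with $\sum_j\theta_j^{(n-1)/2}<\varepsilon$, attach to each cap a barrier whose value at a fixed reference point is $O(\theta_j^{(n-1)/2})$ (the exponent coming from the decay rate $r=\tfrac{n-1}{2}+\tfrac{n-1}{2}\sqrt{1-\lambda/\lambda_1}\ge\tfrac{n-1}{2}$, which is your $s_+$), sum, and run a comparison on a bounded subdomain where the maximum principle holds because $\lambda\le\lambda_1(\mathbb{H}^n)$ is strictly below the first eigenvalue of any bounded domain. But there is a genuine gap exactly where you locate it: the barriers $v_j$ satisfying (a) and (b) are never constructed, and that construction is the entire substance of the theorem. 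Moreover, the heuristic you give for why the construction is hard points in the wrong direction. You assert that a positive $\lambda$-harmonic function can only blow up at a single Martin point, so $v_j$ "must be genuinely $\lambda$-super-harmonic"; in fact the paper's barrier is an honest $\lambda$-harmonic function on the relevant region, namely $w_{I_j}=h(d)/h(d_0)$, where $d$ is the signed distance to the totally geodesic hypersphere $\partial I_j$ subtending the cap and $h$ is the Fr\"obenius solution of $h''+(n-1)\tanh(d)\,h'=-\lambda h$ with $0<h(d)\le C_3e^{-rd}$ for $d>d_0$ (Lemma \ref{lem-perfilAutofuncaoHiperbola}). Your property (b) then follows from the elementary distance estimate $d(0,\partial I_j)\ge\ln(C_1/\theta_j)$ (Lemma \ref{lem-disthiprnomodelo}), giving $w_{I_j}(0)\le C_4\,\theta_j^{(n-1)/2}$ (Lemma \ref{lem-horofunclimitada}). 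No superharmonic modification, and no delicate analysis of $P(x,\xi)^{s_+}$ at Euclidean scale $r_j$, is needed.

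The reason this works is that your property (a) is stronger than necessary: the barrier does not have to dominate $M$ on a full cone-neighborhood of the closed cap. Instead of comparing on the exhaustion $\Omega\cap B_R(x_0)$ and worrying about a uniform lower bound for $v$ on $\partial B_R\cap\overline{\Omega}$, the paper compares $u$ with $w=\sum_j w_{I_j}$ on the single bounded set $O=\Omega\cap\{q:\,d(q,I_j)>d_0\ \forall j\}$, whose relative boundary inside $\Omega$ lies on the level sets $\{d(\cdot,I_j)=d_0\}$, where the corresponding $w_{I_j}$ equals exactly $1>\sup u$; the remaining part of $\partial O$ lies on $\partial\Omega$, where $u=0<w$. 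Boundedness of $O$ comes from compactness of $\partial_\infty\Omega$ together with the fact that the caps cover it, and the final contradiction is that $u-w$ would be a positive Dirichlet $\lambda$-eigenfunction of a bounded domain with $\lambda\le\lambda_1(\mathbb{H}^n)$. In short: right exponent, right covering argument, right comparison framework, but the load-bearing lemma --- an explicit barrier with the $\theta^{(n-1)/2}$ normalization --- is missing, and your stated reason for its difficulty would steer you away from the simple choice that actually closes the proof.
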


A consequence of Theorem \ref{Theo_NoBoundedSolutionForNullHausdorff} is that for domains $\Omega$ with $H^{(n-1)/2}(\partial_{\infty} \Omega)=0$ we have a comparison principle, that is, if $u_1$ and $u_2$ are bounded classical solutions of $$-\Delta u = \lambda u + f \quad {\rm in} \; \Omega,$$
where $f\in C(\Omega)$, satisfying $u_1 \le u_2$ on $\partial \Omega$, then $u_1 \le u_2$ in $\Omega$. In particular for $f=0$, considering $u_2=0$, we conclude that if $u_1\le 0$
on $\partial \Omega$, then $u_1 \le 0$ in $\Omega$. This is a special case of maximum principle studied, for instance, by Berestycki, Nirenberg and Varadhan \cite{BNV} in bounded domains of $\mathbb{R}^n$ for subsolutions of a large class of second order elliptic equations $Lu = -\lambda u$. They proved this maximum principle provided $\lambda < \lambda_1(L,\Omega)$, where $\lambda_1(L,\Omega)$ is a sort of eigenvalue defined for nondivergent operators.

The comparison principle also implies the uniqueness of bounded solutions to the Dirichlet problem  
$$
 \left\{ \begin{array}{rcl}  -\Delta u & = & \lambda u  + f\quad {\rm in } \quad \Omega \\[5pt]
                                  u & = & \varphi \quad {\rm on } \quad \partial \Omega  \end{array} \right.
$$
for $\lambda \in [0,\lambda_1]$.
Since any $L^p(\Omega)$ $\lambda$-harmonic function in Hadamard manifolds is bounded for $p \ge 2$ (see \cite{BKT}), this uniqueness result also holds in $L^p(\Omega)$ for $p \ge 2$.

Besides Theorem \ref{Theo_NoBoundedSolutionForNullHausdorff} can be used to prove uniqueness of Green's functions of the operator $-\Delta - \lambda $ that are bounded outside a ball containing the singularity. Indeed suppose that $G_x^1$ and $G_x^2$ are Green's functions of a domain $\Omega$ with $H^{(n-1)/2}(\partial_{\infty} \Omega)=0$ associated to the point $x \in \Omega$. If they are bounded outside a ball centered at $x$, then $G_x^1-G_x^2$ is a bounded solution of \eqref{definitionOfLambdaHarmonic}.
Therefore, it must be zero, proving that $G_x^1=G_x^2$. This result does not hold for a general domain, because if a domain has a bounded $\lambda$-harmonic function $u$ and  such a $G_x^1$, then $G_x^1 + c\, u$, $c \in \mathbb{R}$, is a family of Green's functions with the same property. The behavior of Green's function for points far way the singularity is studied in \cite{AA1} for Hadamard manifolds. 

\

Theorem \ref{Theo_NoBoundedSolutionForNullHausdorff} is optimal in the sense that if the asymptotic boundary 
 has dimension larger than $(n-1)/2$, then Problem \ref{definitionOfLambdaHarmonic} may have a bounded solution.
In fact, for any $s \in (\frac{n-1}{2}, n-1)$ we prove the existence of open sets $\Omega \subset \mathbb{H}^n$ such that the dimension of $\partial_{\infty} \Omega$ is $s$ and for which there exist bounded solutions that decay exponentially at infinity. (For $n=5$ and $s=3> (5-1)/2$, we present in Section \ref{sec-exis} a $\lambda$-harmonic function of a hiperannuli, that is a domain which possess an asymptotic boundary of dimension $3.$) More precisely, we prove in Theorem \ref{theo-exist} that any truly $s-$dimensional subset (see Definition \ref{def-sconj}) of $\partial_\infty\mathbb{H}^n$ is the asymptotic boundary of a domain that admits a bounded $\lambda-$harmonic function.
These two results give a good relation between existence of solution and the dimension of the asymptotic boundary. 

\

In this work, some preliminaries are presented in Section \ref{sec-prelim}. They include the concept asymptotic boundary of $\mathbb{H}^n$, the cone topology and the Hausdorff dimension of a subset of the asymptotic boundary.
Theorem \ref{Theo_NoBoundedSolutionForNullHausdorff} is proved in Section \ref{sec-nonex}. In Section \ref{sec-exis} we present conditions on a subset $X \subset \partial_{\infty} \mathbb{H}^n$ that guarantee the existence of an open set $\Omega$ that admits a bounded $\lambda$-harmonic function
 such that the asymptotic boundary of $\Omega$ is $X$. Then we build examples in which the asymptotic boundary has dimension $s$ for any $s \in \left( \frac{n-1}{2}, n-1 \right)$. For the case $s=n-1$ simple examples are exposed in Section \ref{sec-exis}.

\section{Prelimaries about $\mathbb{H}^n$}\label{sec-prelim}

\subsection{The asymptotic boundary of $\Hi^n$}

To define the asymptotic boundary of $\Omega,$ $\partial_\infty \Omega,$ we follow the ideas of Eberlein and O'Neill \cite{EO} and consider $\partial_\infty \Hi^n$ as the set of equivalence classes of geodesic rays, where we say that $\gamma_1\sim \gamma_2,$ iff $d(\gamma_1(t), \gamma_2(t))$ is bounded, where $t$ is the arclength. Then the closure of $\Hi^n$ is $\overline{\mathbb{H}^n}=\Hi^n \cup \partial_\infty \Hi^n$ and the cone topology is introduced by saying that any open set of 
$\Hi^n$ is open in $\overline{\Hi^n}$ and truncated cones $C(o, \gamma_0, \theta,R)$ are also open. 
For an arclength parametrized geodesic ray $\gamma_0,$ with $\gamma_0(0)=o,$ the truncated cone $C(o, \gamma_0, \theta,R)$ of opening $\theta$ centered at $\gamma_0$ is the union of the two following sets

$$\begin{array}{l}
C(o, \gamma_0, \theta,R)\cap \Hi^n=\{\gamma(t)\,|\, \gamma \text{ is a geodesic, } \gamma(0)=o, |\gamma'(0)|=1, \sphericalangle(\gamma'(0), \gamma_0'(0))<\theta/2 \text{ and } t>R\}
\end{array}$$

and   $$C(o, \gamma_0, \theta,R)\cap \partial_\infty \Hi^n=\{[\gamma]\,|\, \gamma(t)\in C(o, \gamma_0, \theta,R)\cap \Hi^n\text{ for large }t\}.$$
The asymptotic boundary of a set $\Omega\subset \mathbb{H}^n,$ $\partial_\infty \Omega,$ is the boundary with respect to the cone topology of $\Omega$ in $\mathbb{H}^n\cup \partial_\infty \mathbb{H}^n$ minus its usual boundary.

With this notion of asymptotic boundary, we may define the Hausdorff dimension of a subset $X\subset \partial_\infty \Hi^n.$
We start with sets $X$ of measure zero.
For that, remind that for a given point $o \in \mathbb{H}^n$, we may identify $\partial_{\infty}\mathbb{H}^n$ with $S_1(o)=\{p\in \mathbb{H}^n | d(p,o)=1 \}.$
A point $[\gamma]$ in $\partial_{\infty} \mathbb{H}^n$ is identified with  $f_o([\gamma])\in S_{1}(o)$ intercepting the geodesic $\gamma$ in $[\gamma]$ that starts in $o$ with $S_1(o).$ Then $\partial_{\infty} \mathbb{H}^n$ can be identified with $S_{1}(o)$ by the function $f_o:\partial_{\infty} \mathbb{H}^n \to S_{1}(o)$. Observe that if $f_1:\partial_{\infty} \mathbb{H}^n \to S_1(o_1) $ and $f_2:\partial_{\infty} \mathbb{H}^n \to S_1(o_2) $ are the functions associated to the points $o_1,o_2 \in \mathbb{H}^n$ respectively, then $f_2 \circ f_1^{-1}$ is a bijection between  $S_1(o_1)$ and $S_1(o_2)$. Indeed, $f_2 \circ f_1^{-1}$ is a Lipschitz function according to Proposition 1.3 of \cite{SY}.

As a consequence, suppose that $X \subset \partial_{\infty}\mathbb{H}^n$ is such that $f_1(X)\subset S_1(o_1)$ is a set of $r$-dimensional Hausdorff measure zero. Then the $r$-dimensional Hausdorff measure of $f_2(X)\subset S_1(o_2)$ is also zero. Besides, if $Y \subset \partial_{\infty}\mathbb{H}^n$ is such that $f_1(X)\subset S_1(o_1)$ is a set of $r$-dimensional Hausdorff measure finite, then the $r$-dimensional Hausdorff measure of $f_2(X)\subset S_1(o_2)$ is also finite. Therefore the definitions below are well-posed.

\begin{definition} 
The $r$-dimensional Hausdorff measure of $X \subset \partial_{\infty}\mathbb{H}^n$ is zero, denoted by $H^r(X)=0$, if the $r$-dimensional Hausdorff measure of $f_{1}(X) \subset S_1(o_1)$ is zero for some (and therefore for all) identifications of $X.$
\end{definition}

\begin{definition} 
The $r$-dimensional Hausdorff measure of $X \subset \partial_{\infty}\mathbb{H}^n$ is finite, denoted by $H^r(X)<\infty$, if the $r$-dimensional Hausdorff measure of $f_{1}(X) \subset S_1(o_1)$ is finite for some (and therefore for all) identifications of $X.$
\end{definition}

Moreover, we may define the Hausdorff dimension of a subset of $\partial_\infty\mathbb{H}^n$ as the Hausdorff dimension of $f_1(X)$ for an identification of $X.$ This is well defined because the Hausdorff dimension of $X$ is $s(X)=\inf\{r\geq 0\,\vert\, H^r(X)=0\}.$

Some $r$ dimensional subsets of $\partial_\infty\mathbb{H}^n$ are truly $r-$dimensional, and for them we obtain a sharp result in the sense that they are the asymptotic boundary of a domain that admits a bounded $\lambda_1-$harmonic function iff $r>(n-1)/2.$ Their  precise definition is 

\begin{definition}\label{def-sconj}
For $(n-1)/2 < s < n-1,$ a set $X$ contained in $\partial_\infty \mathbb{H}^n$ is a truly $s-$dimensional set if:
\begin{description}
\item[i)] The Hausdorff dimension of $X$ is $s$;
\item[ii)] $H^{s}(X)$ is finite;
\end{description}
and $X$ admits an identification $f:\partial_\infty \mathbb{H}^n\rightarrow S_1(o)$ such that
\begin{description}
\item[iii)] For any $p_0 \in f(X),$
$$ \liminf_{r \to 0} \frac{H^{s}(f(X) \cap B_r(p_0))}{r^{s}} > 0, $$
for $B_r(p_0)$ the ball centered at $p_0$ in $T_o\mathbb{H}^n$ with radius $r;$  
\item[iv)] There exists $K=K(f(X)) >0$ such that for any $p_0 \in f(X)$ and $r>0,$
$$ H^{s}(f(X) \cap B_r(p_0))\le K{r^{s}}. $$
\end{description}
\end{definition}

A totally geodesic hypersphere in $\Hi^n$ is a $(n-1)-$dimensional totally geodesic submanifold of $\Hi^n.$ A totally geodesic hyperball is a region bounded by a totally geodesic hypersphere. Given a totally geodesic hyperball $\mathcal{I}$ bounded by $\partial\mathcal{I},$ the distance with sign to $\partial\mathcal{I}$ is defined as the hyperbolic distance to $\partial\mathcal{I}$ with positive sign in $\mathcal{I}$ and negative sign in $\mathbb{H}^n\backslash \mathcal{I}.$ The asymptotic boundary of a hypersphere is homeomorphic to $\mathbb{S}^{n-2}$ and therefore has Hausdorff dimension $n-2.$

A horosphere in $\Hi^n$ is a $(n-1)-$dimensional submanifold of $\Hi^n$ obtained as the limit set of a sequence of geodesic spheres centered along a geodeisic ray $\gamma(t)$ that contain $\gamma(0).$ It may be seen as a sphere centered at $[\gamma]\in \partial_\infty \mathbb{H}^n.$ A horoball $\mathcal{H}$ is the region bounded by a horosphere that contains the geodesic ray $\gamma((0,\infty)),$ 
The distance with sign to $\partial\mathcal{H}$ is defined as the hyperbolic distance to $\partial\mathcal{H}$ with positive sign in $\mathcal{H}$ and negative sign in $\mathbb{H}^n\backslash \mathcal{H}.$ The asymptotic boundary of a horosphere is only one point so that it has Hausdorff dimension $0.$

\subsection{The Poincar\'e ball model}

The Poincar\'e ball model of $\mathbb{H}^n$ consists in endow the ball $B=B_1(0)\subset\mathbb{R}^n$ of radius one centered at the origin with a Riemannian metric given by 

$$g_{ij}(p)=\frac{4}{(1-|p|^2)^2}\delta_{ij},$$ where $| \cdot |$ is the euclidean distance to the origin. The advantage of this model is that the Hausdorff measure in $\partial_\infty \mathbb{H}^n$ can be seen in $\partial B.$ It becomes then natural to integrate in subsets of $\partial_\infty\mathbb{H}^n$ by integrating in subsets of $\partial B,$ as we will proceed in Section \ref{sec-exis}.

In this model the geodesics are part of circles that cross $\partial B$ orthogonally and diameters though the origin. The totally geodesic hyperspheres are $(n-1)-$spheres that also intercept $\partial B$ orthogonally and the horospheres are spheres contained in $\overline{B}$ that touch $\partial B$ only at one point, called the center of the horosphere. Given $\alpha \in [-1,1),$ we denote by $H_{z,\alpha}$ the horoball centered at $z\in \partial B$ with $\alpha z\in \partial H_{z,\alpha}.$ We remark that given $z$ and $\alpha,$ there is a unique horoball with the properties described above.

With the model set we are able to prove two lemmas about the distance to horo- and hyperspheres, where we use that the hyperbolic distance from $p\in B$ to the origin 0 of $B$ is

\begin{equation}\label{eq-d_aozero}
d(p,0)=\ln\left(\frac{1+|p|}{1-|p|}\right).
\end{equation}

\begin{lemma}\label{lem-disthiprnomodelo}
Let $0\in B$ be the center of $B$ in the ball model. Given $z\in \partial B$ and a positive $\theta,$ let $C(z, \theta, 0)$ be the cone with vertex at $0$, opening angle $\theta$ with axis the ray that connects $0$ to $z.$ Let $I(z,\theta, 0)$ be the totally geodesic hyperball such that 
$\overline{I(z, \theta, 0)}\cap \partial B = \overline{C(z,\theta, 0)}\cap \partial B.$

There exist constants $C_1,$ $C_2$ and $\theta_0,$ such that the hyperbolic distance $d$ satisfies $$ \ln \left( \frac{C_1}{\theta} \right) \le d(0, \partial I(z,\theta,0)) \le \ln \left( \frac{C_2}{\theta} \right), $$
for $\theta < \theta_0.$
\end{lemma}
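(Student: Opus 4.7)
The plan is to compute $d(0,\partial I(z,\theta,0))$ explicitly in the Poincar\'e ball model and then read off the asymptotic behavior as $\theta\to 0$.

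First, identify $\partial I(z,\theta,0)$ as a Euclidean object. By hypothesis its trace on $\partial B$ is the boundary circle of the spherical cap $\overline{C(z,\theta,0)}\cap\partial B$, namely $\{w\in\partial B:\sphericalangle(w,z)=\theta/2\}$, and $\partial I(z,\theta,0)$ is a Euclidean sphere meeting $\partial B$ orthogonally. By rotational symmetry about the diameter through $z$, its Euclidean center must lie on the axis, say at $c=sz$ with $s>0$, and have Euclidean radius $r>0$. Orthogonality with $\partial B$ translates into $sz\cdot w=1$ for any $w$ on the prescribed boundary circle, which yields $s=\sec(\theta/2)$; then $r=\sqrt{s^{2}-1}=\tan(\theta/2)$.

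Next, locate the closest point on $\partial I(z,\theta,0)$ to $0$. Since every hyperbolic geodesic through $0$ is a Euclidean diameter, formula \eqref{eq-d_aozero} shows that $q\mapsto d(0,q)$ is an increasing function of the Euclidean norm $|q|$ on $\overline{B}$. Hence minimizing the hyperbolic distance over $q\in\partial I(z,\theta,0)$ reduces to minimizing $|q|$, and by Euclidean geometry the minimum is attained at $p=(s-r)z=(\sec(\theta/2)-\tan(\theta/2))z$. Rationalizing gives
$$|p|=\sec(\theta/2)-\tan(\theta/2)=\frac{\cos(\theta/2)}{1+\sin(\theta/2)},$$
and using the half-angle identities $1\pm\cos(\theta/2)=2\sin^{2}(\theta/4)$ or $2\cos^{2}(\theta/4)$ together with $\sin(\theta/2)=2\sin(\theta/4)\cos(\theta/4)$ one finds
$$\frac{1+|p|}{1-|p|}=\cot(\theta/4).$$
Combining with \eqref{eq-d_aozero} gives $d(0,\partial I(z,\theta,0))=\ln\cot(\theta/4)$.

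Finally, since $(\theta/4)\cot(\theta/4)\to 1$ as $\theta\to 0^{+}$, for any fixed $0<C_{1}<4<C_{2}$ there exists $\theta_{0}>0$ such that $C_{1}/\theta<\cot(\theta/4)<C_{2}/\theta$ for all $\theta\in(0,\theta_{0})$; taking logarithms yields the stated inequalities. The argument is essentially a direct computation; the only mildly nontrivial step is the explicit identification of $\partial I(z,\theta,0)$ as the Euclidean sphere with center $\sec(\theta/2)\,z$ and radius $\tan(\theta/2)$, after which everything reduces to elementary trigonometry, so I do not expect a serious obstacle.
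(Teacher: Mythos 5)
Your proof is correct and follows essentially the same route as the paper: identify $\partial I(z,\theta,0)$ as the Euclidean sphere of center $\sec(\theta/2)\,z$ and radius $\tan(\theta/2)$, note that the nearest point to $0$ is $(\sec(\theta/2)-\tan(\theta/2))z$, and apply \eqref{eq-d_aozero}. You add two details the paper leaves implicit (the orthogonality computation fixing the center and radius, and the monotonicity argument reducing hyperbolic to Euclidean minimization) and simplify the resulting expression to the clean closed form $\ln\cot(\theta/4)$, from which the asymptotics are immediate.
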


\begin{proof}
In this setting, $\partial I(z,\theta, 0)$ is, if $\theta<\pi,$ part of an euclidean sphere in $\mathbb{R}^n$ of radius $\tan(\theta/2)$ and center outside $B,$ on the line though $0$ and $z,$ at a distance $\sqrt{\tan^2(\theta/2)+1}=\sec(\theta/2)$ from $0.$ Therefore the euclidean distance from the origin $0$ to $\partial I(z,\theta, 0)$ is $\sec(\theta/2)-\tan(\theta/2).$
From expression \ref{eq-d_aozero}, $$d(0, I(z,\theta,0))=\ln\left(\frac{1+\sec(\theta/2)-\tan(\theta/2)}{1-\sec(\theta/2)+\tan(\theta/2)}\right),$$
which behaves as stated above for $\theta$ close to zero.
\end{proof}

\begin{lemma}\label{lem-disthornomodelo}
The hyperbolic distance (with sign) between $x\in B$ and a horoball $H_{z,0}$ though the origin is
$$d(x,\partial H_{z,0})= \ln \left(  \frac{1-|x|^2}{|z-x|^2} \right). $$
\end{lemma}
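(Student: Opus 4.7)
The plan is to identify the signed distance to $\partial H_{z,0}$ with the negative of the Busemann function at $z$, and then verify an explicit formula for that Busemann function in the ball model.

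First I would set up the Busemann function
$$b_z(x) = \lim_{t \to \infty}\bigl(d(x,\gamma(t)) - t\bigr),$$
where $\gamma$ is the unit-speed geodesic ray from $0$ with $[\gamma] = z$. Standard properties of Busemann functions in $\mathbb{H}^n$ give that the limit exists, $b_z$ is smooth, $|\nabla b_z|_{\mathrm{hyp}} \equiv 1$, its gradient flow lines are precisely the geodesic rays ending at $z$, and its level sets are the horospheres centered at $z$. Since $b_z(0) = 0$, the zero level set is exactly $\partial H_{z,0}$, and $b_z < 0$ inside the horoball (where $\gamma((0,\infty))$ lives). Combined with the unit-gradient property and convexity of horoballs, this yields $d(x, \partial H_{z,0}) = -b_z(x)$ with the sign convention in the statement.

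Second, I would set
$$F(x) := \ln\!\left(\frac{|z-x|^2}{1-|x|^2}\right)$$
and prove $F \equiv b_z$ by checking three characterizing properties: (i) $F(0) = \ln|z|^2 = 0$ since $|z|=1$; (ii) each level set $\{F = c\}$ is a Euclidean sphere tangent to $\partial B$ at $z$, hence a horosphere centered at $z$; and (iii) $|\nabla F|_{\mathrm{hyp}} \equiv 1$. Property (iii) is the only real computation: one expands
$$\nabla F = \frac{2(x-z)}{|z-x|^2} + \frac{2x}{1-|x|^2},$$
uses $|z-x|^2 = 1 - 2\langle z,x\rangle + |x|^2$ to simplify the cross term, and obtains $|\nabla F|_{\mathrm{eucl}}^2 = 4/(1-|x|^2)^2$; the conformal factor $(1-|x|^2)^2/4$ then gives unit hyperbolic norm. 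A function with these three properties must coincide with $b_z$, so $d(x, \partial H_{z,0}) = -F(x) = \ln\!\left((1-|x|^2)/|z-x|^2\right)$.

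The main subtlety is in the first step: identifying $d(\cdot, \partial H_{z,0})$ with $-b_z$ requires that the minimizing geodesic from $x$ to the horosphere runs along a Busemann gradient line, which uses strict convexity of horoballs together with the eikonal equation $|\nabla b_z|_{\mathrm{hyp}} = 1$. The Euclidean-gradient calculation in the second step is short but must be carried out with care, relying on $|z|=1$. An alternative that bypasses the explicit algebra is to apply a M\"obius isometry sending $z$ to $\infty$ in the upper half-space model, where horoballs centered at infinity are sets $\{y_n \ge a\}$ and distance to their boundary is immediately $|\ln(y_n/a)|$; pulling this formula back to the ball model through the transformation that sends $z$ to $\infty$ and $0$ to some $(0,\ldots,0,a_0)$ reproduces the stated expression.
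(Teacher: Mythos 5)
Your proof is correct, but it follows a genuinely different route from the paper's. The paper argues synthetically: writing $z=e_1$, it finds the unique $\alpha$ with $x\in\partial H_{z,\alpha}$ by intersecting that horosphere with the axis (giving $\alpha=(x_1-|x|^2)/(1-x_1)$), uses the fact that concentric horospheres are equidistant to reduce $d(x,\partial H_{z,0})$ to $d(\alpha e_1,0)$, and then applies the explicit formula $d(p,0)=\ln\bigl((1+|p|)/(1-|p|)\bigr)$ already recorded as \eqref{eq-d_aozero}. You instead identify the signed distance with $-b_z$ for the Busemann function $b_z$ and verify that $F(x)=\ln\bigl(|z-x|^2/(1-|x|^2)\bigr)$ satisfies the three characterizing properties; your Euclidean gradient computation does close up correctly, since the cross term simplifies via $|z-x|^2=1-2\langle z,x\rangle+|x|^2$ to give $|\nabla F|_{\mathrm{eucl}}^2=4/(1-|x|^2)^2$. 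Two small points to tighten: (a) the uniqueness step "a function with these three properties must coincide with $b_z$" also needs an orientation check (that $F$, like $b_z$, decreases toward $z$, e.g.\ $F(z/2)=\ln(1/3)<0$), since $-F$ would satisfy (i)--(iii) as well; and (b) the identification $d(\cdot,\partial H_{z,0})=-b_z$ deserves the one-line justification that geodesics ending at $z$ meet all horospheres centered at $z$ orthogonally and realize the distance between them. Your approach costs more machinery (Busemann functions, the eikonal equation) but is more robust — it would transfer to other Hadamard manifolds and simultaneously re-derives the horosphere foliation — whereas the paper's argument is shorter, entirely elementary, and leans on the explicit radial distance formula it has already set up. The half-space alternative you sketch at the end is also a standard and clean way to get the same identity.
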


\begin{proof}
We can suppose that $z=e_1=(1,0, \dots, 0) \in \partial B$ and use the representation $x = (x_1, y)$, where $y=(x_2, \dots, x_n)$.
Observe that there exists only one $\alpha$ such that the horosphere $\partial H_{z,\alpha}$ contains $x$.  This $\alpha$ corresponds to the intersection between $\partial H_{z,\alpha}$ and the $x_1$-axis, that is given by $(\alpha, 0, \dots, 0)= \alpha \,e_1$, where
\begin{equation}
 \alpha = \frac{x_1 -|x|^2}{1-x_1}. 
 \label{alphaExpression}
\end{equation}
Since the two horospheres $\partial H_{z,\alpha}$ and $\partial H_{z,0}$ are equidistant, the distance between $x$ and $\partial H_{z,0}$ is the same as the distance between $\alpha \, e_1$ and $0$, that is given by
$$ d(p,\partial H_{z,0})= d(\alpha e_1, 0) = \ln \left(\frac{1 + \alpha}{1-\alpha} \right). $$
From \eqref{alphaExpression}, we have
$$ \frac{1 + \alpha}{1-\alpha} =\frac{1-|x|^2}{1-2x_1+|x|^2}= \frac{1-|x|^2}{|z-x|^2}.$$Therefore,
$$ d(x,\partial H_{z,0})= \ln \left(  \frac{1-|x|^2}{|z-x|^2} \right). $$
\end{proof}

\section{Non-existence results}\label{sec-nonex}

In this section we prove the non existence of bounded $\lambda-$harmonic functions in domains of $\Hi^n$ that are unbounded and have a sufficiently small asymptotic boundary. We first prove a weaker result for the case of the asymptotic boundary being a single point and then we generalize it for small sets. For this, we need the $\lambda-$harmonic functions associated to hyperballs.

Given a totally geodesic hyperball $\mathcal{I}\subset \Hi^n,$ if $u:\mathbb{H}^n\rightarrow \R$ is a function that depends only on the distance (with sign) $d$ to the hypersphere $\partial\mathcal{I},$ then the Laplacian of $u$ is given by
$$\Delta u(p)=u''(d(p))+(n-1)\tanh(d(p))u'(d(p)).$$

Hence a $\lambda-$harmonic function in $\mathbb{H}^n$ that depends only on $d$ satisfies
\begin{equation}\label{eq_edo_variavel_d}
h'' (d) + (n-1) \tanh(d) h'(d) = -\lambda h(d). 
\end{equation}

\begin{lemma}
Given $\lambda\in [0,\lambda_1],$ there are constants $d_0>0$ and $C_3>0$ such that the ODE \eqref{eq_edo_variavel_d}
admits a solution $h:(0,\infty)\rightarrow\R$ which is positive and decreasing in $[d_0, +\infty)$ and satisfies $h(d) < C_3e^{-rd}$ for all $d>d_0.$ The exponent $r$ is $r=\frac{n-1}{2}+\frac{n-1}{2}\sqrt{1-\lambda/\lambda_1}.$
\label{lem-perfilAutofuncaoHiperbola}
\end{lemma}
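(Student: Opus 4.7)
The plan is to reduce \eqref{eq_edo_variavel_d} to Schr\"odinger form by a standard substitution, and then to extract the fast-decaying (``recessive'') solution by a variation-of-parameters contraction argument on a half-line $[d_0,\infty)$. The key calculation is that setting $v(d):=\cosh^{(n-1)/2}(d)\,h(d)$ cancels the first-order term: a direct expansion, using $\tanh^2 = 1-\mathrm{sech}^2$ together with the cancellation $(n-1)-2\cdot\tfrac{n-1}{2}=0$, shows that $h$ satisfies \eqref{eq_edo_variavel_d} iff
$$
 v''(d) \;=\; \Bigl[(\lambda_1 - \lambda) - \tfrac{(n-1)(n-3)}{4}\,\mathrm{sech}^2(d)\Bigr]\, v(d).
$$
At $+\infty$ this degenerates to $v'' = \mu^2 v$ with $\mu := \sqrt{\lambda_1 - \lambda} = \tfrac{n-1}{2}\sqrt{1-\lambda/\lambda_1}$, whose decaying fundamental solution is $e^{-\mu d}$; hence $h = v/\cosh^{(n-1)/2}(d) \sim 2^{(n-1)/2} e^{-rd}$ with $r = \mu + \tfrac{n-1}{2}$, matching the statement.

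For $\lambda \in [0, \lambda_1)$, so $\mu > 0$, I would produce $v$ via the integral equation
$$
 v(d) \;=\; e^{-\mu d} + \int_d^\infty \frac{\sinh(\mu(s-d))}{\mu}\, V(s)\, v(s)\, ds, \qquad V(s):=-\tfrac{(n-1)(n-3)}{4}\,\mathrm{sech}^2(s),
$$
solved by Banach fixed point on the space $\{v \in C([d_0,\infty)) : \sup_{d\ge d_0} e^{\mu d}|v(d)| < \infty\}$ with the corresponding weighted sup-norm. Because $|\sinh(\mu(s-d))| \le \tfrac12 e^{\mu(s-d)}$ for $s \ge d$, the operator has Lipschitz constant at most $(2\mu)^{-1}\int_{d_0}^{\infty}|V(s)|\,ds$, which tends to $0$ as $d_0 \to \infty$ since $|V(s)| = O(e^{-2s})$; choosing $d_0$ large thus yields a unique fixed point with $v(d) = e^{-\mu d}\bigl(1 + O(e^{-2d})\bigr)$, and differentiating the integral equation (using that the kernel vanishes on the diagonal) gives $v'(d) = -\mu e^{-\mu d}(1+o(1))$. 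A standard variation-of-parameters check confirms that any fixed point actually solves the reduced ODE. In the borderline case $\lambda = \lambda_1$ (so $\mu = 0$), the kernel $\sinh(\mu(s-d))/\mu$ is replaced by its pointwise limit $(s-d)$, and the analogous contraction on the space of bounded continuous functions on $[d_0,\infty)$ produces a solution with $v(d) \to 1$ and $v'(d) = O(e^{-2d})$.

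Translating back, $h := v / \cosh^{(n-1)/2}$ is positive and strictly decreasing on $[d_0,\infty)$ (enlarging $d_0$ if necessary so that the $o(1)$ terms are controlled in both $v$ and $v'$), and satisfies $h(d) < C_3 e^{-rd}$ for a suitable $C_3>0$; the extension of $h$ to all of $(0,\infty)$ is automatic from linear ODE theory, since the coefficients of \eqref{eq_edo_variavel_d} are smooth on $[0,\infty)$. The main technical subtlety is the endpoint $\lambda = \lambda_1$, where the two asymptotic exponents $\pm\mu$ coalesce and the Green's kernel must be taken as $(s-d)$ rather than $\sinh(\mu(s-d))/\mu$; what makes the argument uniform in $\lambda$, however, is the structural fact that the perturbing potential $V$ decays exponentially at infinity as an immediate consequence of $\mathrm{sech}^2(d) \to 0$.
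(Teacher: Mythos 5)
Your argument is correct, and it reaches the same conclusion by a genuinely different route. The paper substitutes $t=1/\sinh(d)$, which turns \eqref{eq_edo_variavel_d} into the equation \eqref{eq_edo_variavel_t} with a regular singular point at $t=0$ and indicial roots $\frac{n-1}{2}\pm\sqrt{\lambda_1-\lambda}$; it then takes the Frobenius series $v_1(t)=\sum a_i t^{i+r_1}$ attached to the larger root $r_1=r$, and the bound $v_1(t)\le Dt^{r_1}$ near $t=0$ translates directly into $h(d)\le C_3e^{-rd}$. You instead pass to the Liouville normal form $v=\cosh^{(n-1)/2}(d)\,h$, whose potential $(\lambda_1-\lambda)-\frac{(n-1)(n-3)}{4}\,\mathrm{sech}^2(d)$ you compute correctly (note it matches the indicial roots: $r=\frac{n-1}{2}+\mu$ with $\mu=\sqrt{\lambda_1-\lambda}$), and you build the recessive solution by a Volterra-type integral equation and a contraction in a weighted sup-norm. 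What each approach buys: the Frobenius route is shorter and gives the exact local expansion at infinity in one stroke, but it is slightly delicate at $\lambda=\lambda_1$, where the indicial roots coalesce and the generic two-series statement degenerates (the paper only needs the one series attached to $r_1$, so this is harmless but left implicit); your perturbative construction treats the degenerate case explicitly by replacing the kernel $\sinh(\mu(s-d))/\mu$ with its limit $(s-d)$, gives the asymptotics of both $v$ and $v'$ (hence monotonicity of $h$ for large $d$ without further argument), and generalizes to potentials that are merely exponentially decaying rather than analytic in $1/\sinh(d)$. Both proofs yield the same exponent $r$ and the same qualitative conclusion.
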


\begin{proof}
Consider the change of variables $t=\dfrac{1}{\sinh(d)},$ which brings the behaviour of a solution at infinity to the origin. With this change, \eqref{eq_edo_variavel_d} becomes

\begin{equation}\label{eq_edo_variavel_t}
(1+t^2)v''(t)+\left(\frac{2t^2+2-n}{t}\right)v'(t)+\left(\frac{\lambda}{t^2}\right)v(t)=0,
\end{equation}
solvable by the Fr\"obenius method.

The two (linearly independent, if $\lambda<\lambda_1)$ solutions to \eqref{eq_edo_variavel_t} are

$$v_1(t)=\sum_{i=0}^\infty a_it^{i+r_1} \text{   and   } 
v_2(t)=\sum_{i=0}^\infty b_it^{i+r_2}, $$

with $a_0\neq 0,\; r_1=r$ and $r_2=r-(n-1)\sqrt{1-\lambda/\lambda_1}.$

Observe that $v_1(0)=0$ and if we choose $a_0>0,$ there is a positive $t_0$ such that $v_1$ is an increasing function in $(0,t_0).$ Besides, decreasing $t_0$ if necessary, we may find a constant $D>0,$ such that
$v_1(t)\leq Dt^{r_1}$ in $(0,t_0).$

Hence, taking $h(d)=v_1\left({1}/{\sinh(d)}\right),$ $h$ is a solution to \eqref{eq_edo_variavel_d} such that
$h>0$ in $\left(d_0,+\infty\right)$ where $d_0=\arcsinh \left(1/t_0\right)$ and 
$$h(d)=v_1\left(\dfrac{1}{\sinh(d)}\right)\leq D\left(\dfrac{1}{\sinh(d)}\right)^{r_1}\leq C_3e^{-r_1d}$$ in this interval.

\end{proof}

\begin{definition} For a given totally geodesic hyperball $\mathcal{I}$  in $\mathbb{H}^n,$ define a $\lambda$-harmonic function $w_\mathcal{I}$ in $\mathbb{H}^n$ by
$$ w_\mathcal{I}(x) = \frac{h(d(x))}{h(d_0)}. $$ 
\label{definicaoAutofuncaoHiperbola}
\end{definition}

The function $w_\mathcal{I}$ is a $\lambda-$harmonic function in $\mathbb{H}^n$ that attains value 1 on the hypersphere $d_0$ apart from $\partial\mathcal{I}$ and decreases exponentially to zero for $d>d_0.$

\begin{theorem}\label{Theo_NoBoundedSolutionforOnePointAtTheBoundary}
Let $\Omega$ be a domain of $\mathbb{H}^n$ such that $\partial_{\infty} \Omega = \{p\}$. Then there is no nontrivial bounded $\lambda$-harmonic function that vanishes on $\partial \Omega$ for any $\lambda \in [0,\lambda_1].$
\end{theorem}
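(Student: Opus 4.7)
The plan is to construct, for each $x_0\in\Omega$, a shrinking family of totally geodesic hyperballs $\mathcal{I}_\theta$ that pinch down to $p$, whose associated barriers $w_{\mathcal{I}_\theta}$ from Definition~\ref{definicaoAutofuncaoHiperbola} dominate $u$ on $\Omega\cap\mathcal{I}_\theta$ and decay to zero at $x_0$. Setting $M:=\sup_\Omega|u|$, this will yield $u(x_0)\leq 0$; replacing $u$ by $-u$ gives the reverse inequality and hence $u(x_0)=0$.

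First I would work in the Poincaré ball with $p\in\partial B$ and, for each small $\theta>0$, take $\mathcal{I}_\theta$ to be the totally geodesic hyperball whose bounding hypersphere is the one associated (as in Lemma~\ref{lem-disthiprnomodelo}) to the Euclidean cone of opening $\theta$ with axis the ray from $o$ to $p$, with the orientation chosen so that $p\notin\partial_\infty\mathcal{I}_\theta$. Because $\partial_\infty\Omega=\{p\}$, the asymptotic boundary of $\Omega_\theta:=\Omega\cap\mathcal{I}_\theta$ is contained in $\partial_\infty\Omega\cap\partial_\infty\mathcal{I}_\theta=\emptyset$, so $\Omega_\theta$ is relatively compact in $\mathbb{H}^n$. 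Moreover, Lemma~\ref{lem-disthiprnomodelo} gives $d_\theta(x_0):=d(x_0,\partial\mathcal{I}_\theta)\to+\infty$ as $\theta\to 0^+$, so $x_0\in\tilde{\Omega}_\theta:=\Omega_\theta\cap\{d_\theta>d_0\}$ for all sufficiently small $\theta$.

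Next, I would apply a maximum-principle argument to $u-Mw_{\mathcal{I}_\theta}$, which is $\lambda$-harmonic on $\tilde{\Omega}_\theta$. On $\partial\tilde{\Omega}_\theta\subseteq\partial\Omega\cup\{d_\theta=d_0\}$ the inequality $u\leq Mw_{\mathcal{I}_\theta}$ is free: on $\partial\Omega$ since $u=0$ and $w_{\mathcal{I}_\theta}>0$, and on $\{d_\theta=d_0\}$ since $w_{\mathcal{I}_\theta}=1$ there. Provided the refined maximum principle for $-\Delta-\lambda$ applies on the bounded domain $\tilde{\Omega}_\theta$, this propagates inward to give $u(x_0)\leq Mw_{\mathcal{I}_\theta}(x_0)$. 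By Lemma~\ref{lem-perfilAutofuncaoHiperbola}, $w_{\mathcal{I}_\theta}(x_0)\leq (C_3/h(d_0))e^{-r\,d_\theta(x_0)}$, and letting $\theta\to 0^+$ kills the right-hand side, so $u(x_0)\leq 0$.

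The main obstacle will be justifying the maximum principle for $-\Delta-\lambda$ on $\tilde{\Omega}_\theta$ at the critical value $\lambda=\lambda_1$. This requires the strict spectral gap $\lambda_1(\tilde{\Omega}_\theta)>\lambda_1=(n-1)^2/4$, which I would argue from the fact that $(n-1)^2/4$ sits at the bottom of the essential spectrum of $-\Delta$ on $\mathbb{H}^n$ and cannot be realized as an $L^2$-eigenvalue on any bounded open subset (an eigenfunction extended by zero would give a nontrivial $L^2$-element in the kernel of $-\Delta-(n-1)^2/4$ on $\mathbb{H}^n$, contrary to the purely continuous character of the spectrum there). Once this gap is in hand, the remainder of the argument is geometric: the shrinking-cap construction supplies a family of barriers whose value at $x_0$ is uniformly controlled by $d_\theta(x_0)$, which diverges as the caps collapse onto $p$.
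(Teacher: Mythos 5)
Your proposal is correct and follows essentially the same route as the paper: you compare $u$ against the hyperball barriers $w_{\mathcal{I}}$ of Definition~\ref{definicaoAutofuncaoHiperbola} on the bounded set $\Omega\cap\{d>d_0\}$ and invoke the spectral gap $\lambda_1(D)>\lambda_1(\mathbb{H}^n)$ for bounded $D$, which is exactly the fact the paper uses (phrased there as the nonexistence of a positive $\lambda$-eigenfunction on a bounded domain, with a single well-chosen hyperball and a contradiction in place of your shrinking family and limit). One small caveat: the zero-extension of a Dirichlet eigenfunction is not literally in the kernel of $-\Delta-\lambda_1$ on all of $\mathbb{H}^n$ (it fails the equation distributionally across $\partial D$); the clean justification of the strict gap is domain monotonicity together with the observation that a Rayleigh-quotient minimizer on $\mathbb{H}^n$ would be a smooth solution vanishing on an open set, hence identically zero by unique continuation.
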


\begin{proof}
Assume, for the sake of contradiction, that $\Omega\subset\Hi^n$ is a domain with $\partial_\infty\Omega=\{p\}$ that admits a bounded $\lambda-$harmonic function $u.$ 
Without loss of generality, we may suppose $\sup \; u = 1/2.$ Let $o$ be some point of $\Omega$ such that $u(o)>0.$

Let $h$ be the solution to \eqref{eq_edo_variavel_d} from Lemma \ref{lem-perfilAutofuncaoHiperbola}. Since $h \to 0$ as $d \to +\infty$, there is some $d_1>d_0$ with $$\frac{h(d_1)}{h(d_0)} < u(o). $$

Let $\mathcal{S}$ be the totally geodesic hypersphere centered at $p$ with
$ d(o,\mathcal{S}) = d_1.$ Now consider $\mathcal{I}$ the totally geodesic hyperball bounded by $\mathcal{S}$ that does not contain $p$ in its asymptotic boundary. Let $w_{\mathcal{I}}$ be the $\lambda$-harmonic function of $\mathcal{I}$ from Definition \ref{definicaoAutofuncaoHiperbola}. We find a compact subset $\Omega_1\subset\Omega \cap \mathcal{I}$ in which $u-w_\mathcal{I}$ is a positive bounded $\lambda-$harmonic function.

Take
$$ O=\{ x \in \Omega\cap \mathcal{I} \; : \: d(x, \mathcal{S}) > d_0 \} .$$

Observe that $O$ is a bounded non empty set, since $\Omega \cap \mathcal{I}$ is bounded and $o\in O.$ Besides 
$$ w_{\mathcal{I}}(o) = \frac{h(d_1)}{h(d_0)} < u(o).$$

Furthermore, $w_\mathcal{I} > u$ on $\partial O$, since $w_{\mathcal{I}} > 0 = u$ on $\partial O \cap \partial \Omega$ and 
$w_{\mathcal{I}} = 1 > u $ on $\partial O \cap \Omega.$  Therefore, there exists some domain $\Omega_1 \subset O$ such that
$$ u - w_{\mathcal{I}} > 0 \quad {\rm in} \quad \Omega_1$$
and $u - w_{\mathcal{I}} = 0$ on $\partial \Omega_1$. Hence $v=u - w_{\mathcal{I}}$ is a positive $\lambda-$harmonic function of the bounded domain $\Omega_1,$ contradicting the fact that $\lambda\leq \lambda_1\left(\Hi^n\right).$

\end{proof}

In order to prove the stronger version of this theorem, we need to estimate $w_{\mathcal{I}}$ at a point by the size of $\partial_\infty\mathcal{I}.$

\begin{lemma}\label{lem-horofunclimitada}
Let $I=I(x,\theta,0)$ be a totally geodesic hyperball in the ball model $B.$ Then the $\lambda-$harmonic function $w_{I}:B\rightarrow \R$ associated to $I$ satisfies
$$ 0 < w_I(0) \le C_4 \theta^{(n-1)/2}, $$
for any $\theta < \theta_1,$ 
 for some constants $C_4 > 0$ and $\theta_1>0.$ 
\end{lemma}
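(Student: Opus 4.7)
The plan is to combine the distance estimate from Lemma~\ref{lem-disthiprnomodelo} with the exponential decay of the radial profile $h$ from Lemma~\ref{lem-perfilAutofuncaoHiperbola}. Since $w_I(x) = h(d(x))/h(d_0)$ depends only on the distance to $\partial I$, a lower bound on $d(0,\partial I)$ translates immediately into an upper bound on $w_I(0)$, and matching the exponent $r$ against $(n-1)/2$ produces the claimed power of $\theta$.

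Concretely, I would first apply Lemma~\ref{lem-disthiprnomodelo} to obtain $d(0,\partial I)\ge \ln(C_1/\theta)$ for $\theta<\theta_0$; after possibly shrinking the threshold $\theta_1\le\theta_0$, we may assume this distance exceeds $d_0$, placing the origin in the regime where the decay of $h$ is effective. Writing $r = \tfrac{n-1}{2}+\tfrac{n-1}{2}\sqrt{1-\lambda/\lambda_1}\ge \tfrac{n-1}{2}$, the bound $h(d)\le C_3 e^{-rd}$ from Lemma~\ref{lem-perfilAutofuncaoHiperbola} then yields
$$ w_I(0)\;\le\;\frac{C_3}{h(d_0)}\,e^{-r\,d(0,\partial I)} \;\le\; \frac{C_3}{h(d_0)\,C_1^{r}}\,\theta^{r}. $$
Choosing $\theta_1\le 1$, the inequality $r\ge (n-1)/2$ gives $\theta^{r}\le \theta^{(n-1)/2}$, and absorbing the remaining constants produces the desired $C_4$. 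Positivity $w_I(0)>0$ is immediate from the positivity of $h$.

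The one step that requires care is making sense of $w_I$ on the side of $\partial I$ where the origin lies: for small $\theta$ the hyperball $I$ is a thin region near $z\in \partial B$ and $0\notin I$, so the relevant ``distance to $\partial I$'' is on the outside. This is handled either by extending $h$ across $\partial I$ using the ODE \eqref{eq_edo_variavel_d} (whose coefficients are regular on all of $\mathbb{R}$) or by exploiting the natural symmetry of the problem across $\partial I$; in either interpretation the large-distance decay at the origin is still governed by the same exponent $r$, which is all that the argument above requires.
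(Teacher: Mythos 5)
Your argument is correct and follows the same route as the paper: combine the lower bound $d(0,\partial I)\ge\ln(C_1/\theta)$ from Lemma~\ref{lem-disthiprnomodelo} with the decay $h(d)\le C_3e^{-rd}$, $r\ge (n-1)/2$, from Lemma~\ref{lem-perfilAutofuncaoHiperbola}, after shrinking $\theta_1$ so that this distance exceeds $d_0$. One caveat on your closing remark: of the two ways you propose to handle the fact that $0\notin I$, only the reflection/symmetry one is safe --- since $\tanh$ is odd, $h(-d)$ solves the same ODE, so one simply measures the signed distance positively on the side of $\partial I$ containing the origin (which is what the paper implicitly does) --- whereas merely continuing $h$ past $d=0$ to negative $d$ would generically produce the slower decay exponent $(n-1)-r$ on that side rather than $r$.
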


\begin{proof}
According to Lemma \ref{lem-perfilAutofuncaoHiperbola}, 
$$h(d) \le C_3 e^{-(n-1)d/2 }$$
for $d > d_0$ and, from Lemma \ref{lem-disthiprnomodelo}, 
$$d=d(0, \partial I) > \ln \frac{C_1}{\theta} $$
for $\theta < \theta_0$. Choose $\theta_1 < \theta_0$ such that $\ln\left(\frac{C_1}{\theta_1}\right) > d_0$. Hence, for $\theta < \theta_1$, both inequalities are satisfied.
Therefore, 
$$ w_I(0)=h(d(0)) \le C_3 e^{-(n-1)d(0)/2} \le C_3 e^{-(n-1)/2 \ln (C_1/\theta)} = C_4 \theta^{(n-1)/2}. $$
\end{proof}

\begin{theorem}  
Let $\Omega \subset \mathbb{H}^n$ such that $H^{(n-1)/2}(\partial_{\infty} \Omega)=0$. Then there is no nontrivial bounded $\lambda$-harmonic function that
vanishes on $\partial \Omega$ for $\lambda \in [0,\lambda_1]$. 
\end{theorem}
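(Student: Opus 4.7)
The plan is to argue by contradiction, promoting the single-hyperball argument of Theorem \ref{Theo_NoBoundedSolutionforOnePointAtTheBoundary} to a sum of barriers $w_{I_j}$ indexed by a Hausdorff-fine cover of $\partial_\infty \Omega$. Assume $u$ is a nontrivial bounded $\lambda$-harmonic function in $\Omega$ with $u = 0$ on $\partial \Omega$ and $\lambda \in [0,\lambda_1]$; after normalizing $\sup u = 1/2$, I fix $o \in \Omega$ with $u(o) > 0$ and work in the Poincar\'e ball model centered at $o$. Since $H^{(n-1)/2}(\partial_\infty \Omega) = 0$, for every $\epsilon > 0$ I can cover $\partial_\infty \Omega \subset \partial B$ by countably many spherical caps of opening angle $\theta_j < \theta_1$ with $\sum_j \theta_j^{(n-1)/2} < \epsilon$. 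To each cap I associate the totally geodesic hyperball $I_j$ whose asymptotic boundary equals that cap, and set
$$ W(x) = \sum_j w_{I_j}(x). $$
Lemma \ref{lem-horofunclimitada} gives $w_{I_j}(o) \le C_4 \theta_j^{(n-1)/2}$, so $W(o) \le C_4 \epsilon$; choosing $\epsilon$ with $C_4 \epsilon < u(o)$ and using Harnack's inequality on each nonnegative summand, the series converges locally uniformly on $\Omega$ and defines a nonnegative $\lambda$-harmonic function $W$ with $W(o) < u(o)$.

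The next step is to pass to the barrier region
$$ \Omega^* = \Omega \setminus \bigcup_j \bigl\{ x \in I_j : d(x,\partial I_j) > d_0 \bigr\}, $$
which will be hyperbolically bounded because any sequence in $\Omega$ tending to infinity must accumulate at some point of $\partial_\infty \Omega$, hence (by the ball-model geometry of Lemma \ref{lem-disthiprnomodelo}) eventually lies in some $I_j$ at hyperbolic distance greater than $d_0$ from $\partial I_j$ and so is excised. On $\partial \Omega^*$ the inequality $W \ge u$ will hold: along $\partial \Omega$ one uses $u = 0 \le W$, while on each hypersphere $\{d(\cdot,\partial I_j) = d_0\} \cap \Omega$ one has $w_{I_j} = 1$, giving $W \ge 1 > 1/2 \ge u$. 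Combined with $W(o) < u(o)$, the connected component $\Omega_1$ of $\{u - W > 0\}$ containing $o$ is a bounded subdomain of $\Omega^*$ on which $v = u - W$ is a positive $\lambda$-harmonic function with $v = 0$ on $\partial \Omega_1$. But every bounded domain in $\mathbb{H}^n$ has first Dirichlet eigenvalue strictly larger than $\lambda_1(\mathbb{H}^n) = \lambda_1$, so the existence of $v$ forces $\lambda = \lambda_1(\Omega_1) > \lambda_1$, contradicting $\lambda \le \lambda_1$.

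The main obstacle will be the two-part geometric calibration: verifying that the angular Hausdorff cover by caps of opening $\theta_j$ on $\partial B$ really does thicken (via the hyperballs $I_j$ and the depth $d_0$) to a neighborhood of $\partial_\infty \Omega$ strong enough to trap every hyperbolic escape from $\Omega^*$, while simultaneously keeping $W(o)$ small via the bound $w_{I_j}(o) \le C_4 \theta_j^{(n-1)/2}$. This is the step that pins down why the exponent $(n-1)/2$ --- half the boundary dimension of $\mathbb{H}^n$ --- is the correct Hausdorff threshold for nonexistence, matching the decay rate $r=(n-1)/2$ at $\lambda = \lambda_1$ from Lemma \ref{lem-perfilAutofuncaoHiperbola}.
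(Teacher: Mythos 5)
Your proposal is correct and follows essentially the same route as the paper: cover $\partial_\infty\Omega$ by caps with small $(n-1)/2$-dimensional content, sum the exponentially decaying hyperball barriers $w_{I_j}$ (with convergence of the series obtained from Harnack's inequality plus elliptic regularity), and compare $W$ with $u$ on a bounded region to force a positive $\lambda$-harmonic function on a bounded domain, contradicting $\lambda\le\lambda_1(\mathbb{H}^n)<\lambda_1(\Omega_1)$. The one small discrepancy is the excised region: the paper works in $V_{d_0}=\{q\,:\,d(q,I_j)>d_0\ \forall j\}$, i.e., it removes each $I_j$ \emph{together with} a $d_0$-collar on the side where the profile $h$ is controlled (the side on which $w_{I_j}=1$ at distance $d_0$ and decays toward the basepoint), rather than only the deep interior of $I_j$ as in your $\Omega^*$; with that adjustment every summand is evaluated in the range where it is nonnegative and your boundary inequality $W\ge 1>u$ holds exactly as you state.
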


\begin{proof}
We follow the same idea as in Theorem \ref{Theo_NoBoundedSolutionforOnePointAtTheBoundary}. Assume that there exists such a function $u,$ that we suppose satisfies $\sup \; u = 1/2.$
Let $B$ be the ball model of $\mathbb{H}^n.$ Without loss of generality, we assume that for $0$ the origin of $B,$ $0 < u(0) \le 1/2.$

Let $X'\subset \partial B$ be the asymptotic boundary of $\Omega$ in this model. Since the $(n-1)/2$-dimensional Hausdorff measure of $X'$ is zero, for any $\varepsilon > 0$ there exists a countable collection of balls in $\partial B,$ $U_{\theta_{i}} (x_i) =C(x_i,\theta_i,0) \cap \partial B$ that covers $X'$, such that $\theta_i < \theta_1$ and 

\begin{equation} \sum_{i}    \theta_i^{(n-1)/2} < \varepsilon. \label{thetaisumcontrol}
\end{equation} 
Consider the hyperballs $I_i=I(x_i,\theta,0)$ and the associated $\lambda_1-$harmonic function $w_i= w_{I_i}$. Then, from Lemma \ref{lem-horofunclimitada}, 
$$ 0 \le w_i(0) \le  C_4 \theta_i^{(n-1)/2}. $$
Using \eqref{thetaisumcontrol}, we get
$$ 0 < \sum_{i} w_i(0) \le C_4 \varepsilon. $$ 
Taking $\varepsilon$ small, it follows that 
$$ 0 < \sum_{i} w_i(0) < u(0). $$
Let $O = V_{d_0} \cap \Omega $, where $V_{d_0} = \{ q \in \mathbb{H}^n \; : \; d(q, I_i) > d_0 \; \forall i \}$. 
Observe that $w_i > 0$ in $V_{d_0}$ and $\sum_{i=1}^N w_i(p)$ is bounded in compacts of $V_{d_0}$ uniformly in $N,$ by Harnack Inequality. Hence $w=\sum_i w_i$ is well defined, positive and, from the regularity theory, $\lambda_1$-harmonic function. 

Defining $v=u-w$ in $O$ we have that $v(o) > 0$ and $v < 0$ on $\partial O$ since $w > 0=u$ on $\partial O \backslash \Omega$ and $w \ge 1 > u$ on $\partial O \cap \Omega$. Moreover $O$ is bounded. As before, we obtain a contradiction. 
\end{proof}

\section{Existence results}\label{sec-exis}

The purpose of this section is to show that there exist subsets of $\partial_\infty\mathbb{H}^n$ of dimension $s\in (\frac{n-1}{2},n-1],$ that are the asymptotic boundary of domains that admit bounded $\lambda_1-$harmonic functions. We are especially interested in the case $s\in (\frac{n-1}{2},n-1)$. For that, given a truly subset $X$ of $\partial_\infty \mathbb{H}^n$ of dimension $s,$ we construct a $\lambda_1-$harmonic function $u:\mathbb{H}^n\to \mathbb{R}$ positive in $\Omega$ with $\partial_\infty \Omega=\overline{X},$ as stated in the next theorem.

\begin{theorem}\label{theo-exist}
Let $X$ be a truly $s-$dimensional subset of $\partial_\infty\mathbb{H}^n$ for some $(n-1)/2 < s < n-1.$ Then there is a bounded $\lambda_1$-harmonic function $u$ in $\mathbb{H}^n,$ positive in some set $\Omega$ such that $\partial_{\infty}\Omega = \overline{X}.$ 
Moreover, for any $p\in \Omega$ $$ |u(p)| \le M_0(d+1) \, e^{-\left(s - \frac{(n-1)}{2}\right)d}, $$
where $d=d(p,o)$ for some $o\in \mathbb{H}^n$ fixed and $M_0$ is a positive constant that depends only on $K(f(X)),$ $n$ and $s.$
\end{theorem}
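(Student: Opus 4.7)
The plan is to construct $u$ explicitly as a Poisson-type superposition against the $s$-dimensional Hausdorff measure on $X$. Work in the Poincar\'e ball model $B$ with origin $o$ and identify $X$ with $f(X) \subset \partial B$ as in Definition \ref{def-sconj}. Recall the classical fact that, for every $z \in \partial B$, the Poisson kernel
\[
P(x, z) = \frac{1-|x|^2}{|z - x|^2}, \qquad x \in B,
\]
makes $P(\cdot, z)^{\alpha}$ an $\alpha(n-1-\alpha)$-harmonic function of $\mathbb{H}^n$; taking $\alpha = (n-1)/2$ produces a positive $\lambda_1$-harmonic function. Letting $\mu$ denote the Borel measure $H^s$ restricted to $f(X)$, which is finite by hypothesis (ii), I would set
\[
u(x) := \int_{f(X)} P(x, z)^{(n-1)/2}\, d\mu(z).
\]
This integral is finite for every $x \in B$ since $P(x, z) \le (1+|x|)/(1-|x|)$ uniformly in $z$. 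Differentiation under the integral sign, justified by uniform bounds of $P(\cdot, z)^{(n-1)/2}$ and its first two derivatives for $x$ in any compact subset of $B$ and $z \in \partial B$, shows $u$ is $\lambda_1$-harmonic on all of $\mathbb{H}^n$, and $u > 0$ everywhere since the kernel is strictly positive and $\mu \neq 0$.

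The main quantitative content is the decay estimate, which also yields boundedness. For $x \in B \setminus \{o\}$ set $\xi = x/|x| \in \partial B$ and $\rho = 1 - |x|$. The identity $|z - x|^{2} = \rho^{2} + |x|\,|z - \xi|^{2}$ for $z \in \partial B$ gives $|z - x| \ge \max(\rho, \sqrt{|x|}\,|z - \xi|)$. I would split the integration into a near part $A = f(X) \cap B_\rho(\xi)$ and a far part $f(X) \setminus A$. The upper regularity bound (iv) yields $\mu(A) \le K \rho^{s}$, whence
\[
\int_A P(x, z)^{(n-1)/2}\, d\mu(z) \le (1-|x|^2)^{(n-1)/2}\rho^{-(n-1)}\mu(A) \le C K \rho^{s - (n-1)/2}.
\]
For the far part, writing $F(t) = \mu(f(X) \cap B_t(\xi)) \le K t^s$ and using $|z - x| \ge \sqrt{|x|}\,|z - \xi|$, a Stieltjes integration by parts of $\int_{\rho}^{2} t^{-(n-1)}\, dF(t)$ together with $s < n-1$ produces an $O(\rho^{s - n + 1})$ bound; multiplying by the prefactor $(1 - |x|^2)^{(n-1)/2}$ gives again a contribution of order $\rho^{s - (n-1)/2}$. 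Converting via $\rho \le 2 e^{-d}$ for $d = d(x, o)$ large yields $u(x) \le C e^{-(s - (n-1)/2)\, d}$, while for $d$ bounded one has $u(x) \le u(o) = \mu(f(X)) < \infty$. Combining the two regimes produces the stated bound $|u(p)| \le M_0(d + 1)\, e^{-(s - (n-1)/2)\, d}$, with $M_0$ depending only on $K(f(X))$, $n$, and $s$.

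Since $u > 0$ everywhere on $\mathbb{H}^n$, the positivity requirement is satisfied on \emph{any} open set, and only the asymptotic boundary condition constrains the choice of $\Omega$. A slowly narrowing tubular neighborhood in the cone topology of the union of geodesic rays from $o$ to points of $\overline{X}$ gives such an $\Omega$ with $\partial_\infty \Omega = \overline{X}$. The principal technical obstacle is the decay estimate: the integrand has a singularity of order $|z - \xi|^{-(n-1)}$ against an $s$-dimensional measure with $s < n - 1$, so the associated Riesz potential just barely converges, and the upper regularity condition (iv) is precisely what is needed to control it. The condition $s > (n-1)/2$ is essential to obtain a positive decay exponent, which makes this the sharp converse to Theorem \ref{Theo_NoBoundedSolutionForNullHausdorff}.
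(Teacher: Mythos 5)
Your construction differs from the paper's in one decisive respect, and that difference opens a genuine gap. You integrate the kernel $P(x,z)^{(n-1)/2}$ with $P(x,z)=\frac{1-|x|^2}{|z-x|^2}$, which is strictly positive, so your $u$ is positive on all of $\mathbb{H}^n$ and $\{u>0\}=\mathbb{H}^n$. You then propose to take $\Omega$ to be a tubular neighborhood of the rays toward $\overline{X}$ so that $\partial_\infty\Omega=\overline{X}$ and declare the positivity requirement vacuously satisfied. But the content of the theorem, as the paper uses it (it is stated as the sharp converse of Theorem \ref{Theo_NoBoundedSolutionForNullHausdorff}, which concerns solutions of \eqref{definitionOfLambdaHarmonic} vanishing on $\partial\Omega$), is that $\overline{X}$ is the asymptotic boundary of a domain that \emph{admits} a bounded $\lambda_1$-harmonic function, i.e.\ one with $u>0$ in $\Omega$ and $u=0$ on $\partial\Omega$. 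Your $u$ restricted to a tube does not vanish on the tube's boundary, so it is not a $\lambda_1$-harmonic function of that domain. Under your literal reading the theorem would be trivially true for \emph{every} $X$ (even of Hausdorff measure zero): the radial positive $\lambda_1$-harmonic function on $\mathbb{H}^n$ decays like $e^{-(n-1)d/2}$, faster than required, and is positive on any tube. So the reading that makes your choice of $\Omega$ admissible also empties the theorem of content; the reading that gives it content requires $\Omega=\{u>0\}$ with $u$ changing sign.

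The paper resolves this by integrating instead the kernel $u_z(x)=P(x,z)^{(n-1)/2}\ln P(x,z)$ (the second, logarithmic solution at the double characteristic exponent $(n-1)/2$; this is $A_1=0,\ A_2=1$ in \eqref{uzA1A2}). This kernel is negative outside the horoball $H_{z,0}$ and positive inside it, so the superposed $u$ is provably negative near every boundary point of $\partial B\setminus\overline{f(X)}$, while near a point $x_0\in\overline{f(X)}$ the positive contribution from $z\in f(X)\cap B_\delta(x_0)$ dominates the negative far-field contribution. That last step is exactly where the lower regularity hypothesis (iii) of Definition \ref{def-sconj} enters, and the fact that your argument never invokes (iii) is the symptom of the gap: without a sign-changing kernel there is nothing for (iii) to do, and without (iii) one cannot show $\overline{X}\subset\partial_\infty\{u>0\}$. (The $(d+1)$ factor in the stated bound is also an artifact of the logarithm in the kernel; its absence from your estimate is another sign the construction has drifted.) What you do have right, and what coincides with the paper's Claim~1, is the quantitative part: the dyadic decomposition of $f(X)$ into annuli around $x/|x|$, the use of (iv) to bound $H^s$ of each annulus by $K(2^{i+1}\delta)^s$, the resulting bound $C\,\delta^{s-(n-1)/2}$ (up to the logarithmic factor) for the absolutely convergent Riesz-type integral, and the conversion $\delta\asymp e^{-d}$. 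To repair the proof, keep that machinery but apply it to $u_z=P^{(n-1)/2}\ln P$, and add the two sign arguments (negativity off $\overline{f(X)}$ via $r<\bar d^2/8$, positivity at $\overline{f(X)}$ via (iii)).
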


For proving this result we integrate $\lambda_1$-harmonic functions that are constant along horospheres. To find them, let $\mathcal{H} \subset \mathbb{H}^n$ be a horoball and $d:\mathbb{H}^n \to \mathbb{R}$ be the distance with sign (positive in $\mathcal{H}$) to $\partial\mathcal{H}.$ 
Then a $\lambda_1-$harmonic function that is constant on horospheres equidistant to $\partial\mathcal{H}$ has the form $u_{\mathcal{H}}(p)=u_{\mathcal{H}}(d(p))$ and its Laplacian is 
$$\Delta u_{\mathcal{H}}(p) = u_{\mathcal{H}}''(d(p)) + (n-1) u_{\mathcal{H}}'(d(p)). $$
Therefore, $h=u_{\mathcal{H}}\circ d$ satisfies
$$h''(d) + (n-1) h'(d) = - \lambda_1 h(d). $$
Hence,
\begin{equation}
 u_{\mathcal{H}}(p) = u_{\mathcal{H}}(d(p)) = (A_1 + A_2 d(p) \, ) e^{(n-1)d(p)/2},
\label{solutionInHoroballs}
\end{equation}
for constants $A_1$ and $A_2.$

The next lemma, a consequence of Lemma \ref{lem-disthornomodelo}, presents an expression of $u_{\mathcal{H}}$ in the Poincar\'e ball model.

\begin{lemma}
In the Poincar\'e ball model, any $\lambda_1-$harmonic function given by \eqref{solutionInHoroballs} associated to the horoball $H_{z,0}$ can be expressed  by
\begin{equation}
u_{z,A_1, A_2}(x) = \left[ \frac{1-|x|^2}{|z - x|^2} \right]^{(n-1)/2} \left( A_1 + A_2 \ln\left[ \frac{1-|x|^2}{|z-x|^2} \right] \right).
\label{uzA1A2}
\end{equation}
\end{lemma}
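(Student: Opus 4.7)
The plan is to substitute the explicit formula for the signed distance to the horosphere $\partial H_{z,0}$ from Lemma \ref{lem-disthornomodelo} directly into the general expression \eqref{solutionInHoroballs}. Since the $\lambda_1$-harmonic function $u_{\mathcal{H}}$ in \eqref{solutionInHoroballs} depends on $p$ only through $d(p) = d(p, \partial \mathcal{H})$, and since Lemma \ref{lem-disthornomodelo} gives
$$d(x,\partial H_{z,0}) = \ln\!\left(\frac{1-|x|^2}{|z-x|^2}\right)$$
in the ball model, this is essentially a one-line calculation.

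Concretely, first I would note that
$$e^{(n-1)d(x)/2} = \left[\frac{1-|x|^2}{|z-x|^2}\right]^{(n-1)/2},$$
which handles the exponential factor in \eqref{solutionInHoroballs}. Then I would replace $A_1 + A_2\, d(x)$ by $A_1 + A_2 \ln\!\left[(1-|x|^2)/|z-x|^2\right]$ to obtain the linear factor. Multiplying the two expressions yields exactly \eqref{uzA1A2}.

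The only point that merits a brief comment is the sign convention: Lemma \ref{lem-disthornomodelo} defines $d$ to be positive inside the horoball $H_{z,0}$ and negative outside, which is the same convention used in the derivation of \eqref{solutionInHoroballs}. Thus no change of sign is needed, and the formula \eqref{uzA1A2} holds throughout $B$ (with the understanding that when $x$ is outside $H_{z,0}$ the quantity $(1-|x|^2)/|z-x|^2$ lies in $(0,1)$, so the logarithm is negative, consistent with $d(x) < 0$). There is no real obstacle in this proof; it is a direct application of the previous lemma.
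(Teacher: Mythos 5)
Your proof is correct and is exactly the paper's argument: the paper simply notes that the lemma follows by substituting the distance formula of Lemma \ref{lem-disthornomodelo} into expression \eqref{solutionInHoroballs}. Your additional remark on the sign convention is consistent with both statements and requires no further justification.
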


The result follows from expression \eqref{solutionInHoroballs} and Lemma \ref{lem-disthornomodelo}. We denote by $u_z$ the function above for $A_1=0$ and $A_2=1.$

\begin{proof}[Proof of Theorem \ref{theo-exist}]

We divide the proof into three claims. The first one asserts the absolute integrability of $u_z$, the second provides a way to obtain the required solution $u$, and the third one treats the decay of $u$.

\begin{claim}\label{lem-iv_implica_limitada}
Let $\Lambda\subset \partial B,$ obtained as an identification of $X\subset \partial_\infty \mathbb{H}^n,$ a truly $s-$dimensional set not necessarily satisfying condition iii) from Definition \ref{def-sconj}. There is $M > 0$ depending only on $n$ and $s,$ such that

\begin{equation}
 \int_\Lambda \left| u_z(x)\right| \, dH^{s}(z) < M K(\Lambda) ,
\label{boundForHausdorffIntegral}
\end{equation}
holds for any $x \in B.$

\end{claim}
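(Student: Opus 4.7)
The plan is to fix $x \in B$, compare $|z-x|$ with the distance from $z$ to the radial projection $p_0 = x/|x|$ of $x$ onto $\partial B$, and then carry out a dyadic decomposition of $\Lambda$ around $p_0$ exploiting the upper bound iv). Writing $t(z) := (1-|x|^2)/|z-x|^2$ so that $|u_z(x)| = t^{(n-1)/2}|\ln t|$, the case $|x| < 1/2$ is immediate: then $t$ is uniformly bounded and condition iv), applied at any $q_0 \in \Lambda$ with $r=2$, gives $H^s(\Lambda) \le 2^s K(\Lambda)$. So only the regime $|x| \ge 1/2$ requires work; set $\epsilon := 1-|x|$.

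The key geometric input is the identity
\[|z-x|^2 = (1-|x|)^2 + |x|\,|z-p_0|^2 \qquad (z\in\partial B),\]
which follows from $|z|=1$ and yields $|z-x|^2 \asymp \epsilon^2 + |z-p_0|^2$ and $1-|x|^2 \asymp \epsilon$. I would then split $\Lambda$ into the inner cap $\Lambda_* := \{z\in\Lambda : |z-p_0| \le \epsilon\}$ and dyadic annuli $\Lambda_k := \{z\in\Lambda : 2^k\epsilon < |z-p_0| \le 2^{k+1}\epsilon\}$ for $k\ge 0$. Although $p_0$ need not lie in $\Lambda$, a nearest-point argument shows that iv) still gives $H^s(\Lambda\cap B_r(p_0)) \le 2^s K(\Lambda)\,r^s$ for every $r>0$; hence $H^s(\Lambda_*) \lesssim K\epsilon^s$ and $H^s(\Lambda_k) \lesssim K(2^k\epsilon)^s$.

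On $\Lambda_*$ one has $t \asymp \epsilon^{-1}$ and $|\ln t| \lesssim |\ln\epsilon|$, while on $\Lambda_k$ one has $t \asymp 2^{-2k}\epsilon^{-1}$ and $|\ln t| \lesssim k + |\ln\epsilon|$. Combining these pointwise bounds with the measure estimates yields
\[\int_{\Lambda_*} t^{(n-1)/2}|\ln t|\,dH^s \;\lesssim\; K\,\epsilon^{s-(n-1)/2}\,|\ln\epsilon|,\]
\[\int_{\Lambda_k} t^{(n-1)/2}|\ln t|\,dH^s \;\lesssim\; K\, 2^{-k(n-1-s)}\,\epsilon^{s-(n-1)/2}(k+|\ln\epsilon|),\]
with implicit constants depending only on $n$ and $s$. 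The geometric series in $k$ converges because $s<n-1$, producing a total bound of order $K\,\epsilon^{s-(n-1)/2}(1+|\ln\epsilon|)$.

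The last step uses $s>(n-1)/2$ to observe that $\epsilon\mapsto \epsilon^{s-(n-1)/2}(1+|\ln\epsilon|)$ is uniformly bounded on $(0,1/2]$ by a constant depending only on $n$ and $s$, which delivers the required inequality with $M=M(n,s)$. The main obstacle is tracking the logarithmic factor $|\ln t|$ through the dyadic sum, and the proof leans on the two hypotheses $(n-1)/2<s<n-1$ in complementary ways: the upper bound makes the series in $k$ convergent, while the lower bound absorbs the residual $|\ln\epsilon|$ into the decaying factor $\epsilon^{s-(n-1)/2}$.
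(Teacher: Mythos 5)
Your proof is correct and follows essentially the same route as the paper's: a dyadic decomposition of $\Lambda$ into annuli of radii comparable to $2^i(1-|x|)$ about the boundary projection of $x$, the measure bound from condition iv), a geometric series convergent because $s<n-1$, and the final uniform bound on $\delta^{s-(n-1)/2}\log(1/\delta)$ using $s>(n-1)/2$. Your nearest-point remark justifying the use of iv) at $p_0=x/|x|$ (which need not lie in $\Lambda$) is a small point the paper glosses over, and is a welcome addition.
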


For $x \in B$, let $\delta=1-|x|$ and $k \in \mathbb{N}$ such that $2 < 2^k \delta \le 4$. Define
$$  \Lambda_i=\{ z \in  \Lambda \, : \, 2^{i-1}\delta \le |z-x| < 2^{i}\delta \} \quad {\rm for } \quad i \in \{ 1, 2, \dots, k\}.$$ 
Observe that
$$ \Lambda = \bigcup_{i=1}^k \Lambda_i, $$
since $\delta \le |z-x| \le 2$ for any $z \in \partial B$. If $z \in \Lambda_i$, we have $|z-x| \ge 2^{i-1}\delta$ and, therefore
\begin{equation}
\frac{1-|x|^2}{|z-x|^2} = \frac{(1 +|x|)(1-|x|)}{|z-x|^2} \le \frac{2\delta}{2^{2(i-1)}\delta^2} = \frac{8}{4^{i}\delta}.
\label{part0OfIntegrandEstimate}
\end{equation}
Hence
\begin{equation*}
\left[\frac{1-|x|^2}{|z-x|^2} \right]^{\frac{n-1}{2}} \le \left(\frac{8}{4^{i}\delta}\right)^{\frac{n-1}{2}}.
\end{equation*}
Moreover,  
$$ \frac{1-|x|^2}{|z-x|^2} = \frac{(1 +|x|)(1-|x|)}{|z-x|^2} \ge \frac{\delta}{4}$$
since $|z-x| \le 2$. Using this inequality and that for $a\le \min\{1,t\}$
$$|\ln t| \le \ln t - 2\ln a,$$  we obtain 
$$
\left| \ln \left[ \frac{1-|x|^2}{|z-x|^2} \right] \right| \le \ln \left[ \frac{1-|x|^2}{|z-x|^2} \right] - 2 \ln \left(\frac{\delta}{4}\right).
$$
Then, the monotonicity of $\ln x$ and \eqref{part0OfIntegrandEstimate} imply that
\begin{equation}
\left| \ln \left[ \frac{1-|x|^2}{|z-x|^2} \right] \right| \le \ln \left( \frac{8}{4^{i}\delta} \right) - 2 \ln \left(\frac{\delta}{4}\right) = \ln \left( \frac{128}{4^i \delta^3} \right) \le \ln \left( \frac{128}{\delta^3} \right).
\label{part2OfIntegrandEstimate}
\end{equation}
From \eqref{part0OfIntegrandEstimate} and \eqref{part2OfIntegrandEstimate}, we conclude that for $z\in \Lambda_i$
\begin{equation}
 |u_z(x)|  =  \left[\frac{1-|x|^2}{|z-x|^2} \right]^{\frac{n-1}{2}} \left| \ln \left[ \frac{1-|x|^2}{|z-x|^2} \right] \right| \le \left(\frac{8}{4^{i}\delta}\right)^{\frac{n-1}{2}} \ln \left( \frac{128}{\delta^3} \right).
\label{ModulusIntegrandEstimate}
\end{equation}
Now we estimate $H^s(\Lambda_i)$. Let $x_0 = \frac{x}{|x|} \in \partial B$. Notice that if $z \in \Lambda_i,$
$$|z-x_0| \le |z-x|+|x-x_0| =|z-x|+\delta \le 2|z-x|
\le 2^{i+1}\delta. $$
Then $\Lambda_i \subset B_{2^{i+1}\delta}(x_0) \cap \Lambda$ and from condition {\it (iv)} of Definition \ref{def-sconj},
\begin{equation}
H^s(\Lambda_i) \le K(\Lambda) (2^{i+1}\delta)^s.
\label{HsCiEstimate}
\end{equation}
Using \eqref{ModulusIntegrandEstimate} and \eqref{HsCiEstimate}, we conclude
\begin{equation}
 \int_{\Lambda_i} |u_z(x)|\, dH^s(z)  \le \left(\frac{8}{2^{2i}\delta}\right)^{\frac{n-1}{2}} \ln \left( \frac{128}{\delta^3} \right)  K(\Lambda) (2^{i+1}\delta)^s=K_0 2^{i(s-(n-1))} \delta^{s-\frac{n-1}{2}} \ln \left( \frac{128}{\delta^3} \right),
 \label{eqIntegralEstimateForModulusOfuzOnCi}
\end{equation}
where $K_0= 8^{\frac{n-1}{2}}2^s K(\Lambda)$. Therefore,
$$ \int_{\Lambda} |u_z(x)|\, dH^s(z) = \sum_{i=1}^k \int_{\Lambda_i} |u_z(x)|\, dH^s(z) \le K_0 \delta^{s-\frac{n-1}{2}} \ln \left( \frac{128}{\delta^3} \right) \sum_{i=1}^k 2^{i(s-(n-1))}.$$
Since $s-(n-1) < 0$, we get $\sum_{i=1}^k 2^{i(s-(n-1))} <  \frac{2^{(s-(n-1))}}{1-2^{(s-(n-1))}}$. Thus
\begin{equation}
 \int_{\Lambda} |u_z(x)|\, dH^s(z) \le K_1 \delta^{s-\frac{n-1}{2}} \ln \left( \frac{128}{\delta^3} \right),
\label{IntegralEstimateForModulusOfuz}
\end{equation}
where $K_1=K_0 \frac{2^{(s-(n-1))}}{1-2^{(s-(n-1))}} $. Using that the right-hand side is uniformly bounded for $\delta \in (0,1)$, we conclude the claim.

The second claim states that the integral in $\Lambda$ of the family of functions 
$u_{z},$ $z\in \Lambda,$ given by $\eqref{uzA1A2}$ defines a bounded $\lambda_1$-harmonic function.

\begin{claim}\label{lem-limitacao}
If $\Lambda$ is a truly $s-$dimensional set and there is $M > 0$ such that \eqref{boundForHausdorffIntegral} holds for any $x \in B$, then the function
\begin{equation}
 u(x) = \int_\Lambda u_z (x) \; dH^{s}(z) ,
\label{uDefinitionByIntegral}
\end{equation}
is a bounded $\lambda_1$-harmonic function, positive in some set $\Omega$ such that $\partial_{\infty}\Omega =\partial \Omega \cap \partial B= \overline{\Lambda}$. 
\end{claim}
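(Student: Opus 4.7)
The plan is to establish, in order, (a) boundedness of $u$, (b) $\lambda_1$-harmonicity of $u$, and (c) the identification $\partial_\infty \Omega = \overline{\Lambda}$ with $\Omega:=\{x\in B:u(x)>0\}$.

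Assertion (a) is immediate: $|u(x)|\le \int_\Lambda |u_z(x)|\,dH^s(z)\le M$ for all $x\in B$ by hypothesis. For (b), I would prove that $u$ weakly satisfies $-\Delta u=\lambda_1 u$ and invoke elliptic regularity. Given $\phi\in C_c^\infty(B)$ supported in a compact $K\subset B$, both $|z-x|$ and $1-|x|^2$ are bounded below uniformly for $x\in K$ and $z\in\partial B$, so $|u_z(x)|\le C_K$ uniformly in $z\in\Lambda$. Condition (ii) of Definition \ref{def-sconj} gives $H^s(\Lambda)<\infty$, so Fubini applies; combined with $\Delta u_z=-\lambda_1 u_z$ and integration by parts, one obtains $\int_B u\,\Delta\phi\,d\vol=-\lambda_1\int_B u\,\phi\,d\vol$, as required.

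The inclusion $\partial_\infty \Omega \subset \overline{\Lambda}$ is the easy half of (c). For $p\in\partial B\setminus\overline{\Lambda}$, pick $r_0>0$ with $B_{r_0}(p)\cap\Lambda=\emptyset$. For $x$ in a cone neighborhood of $p$ with $|x-p|<r_0/2$ and $1-|x|^2$ sufficiently small, one has $|z-x|\ge r_0/2$ for every $z\in\Lambda$, so $P_z(x):=(1-|x|^2)/|z-x|^2<1$ uniformly in $z\in\Lambda$; hence $u_z(x)<0$ for every $z\in\Lambda$ and $u(x)<0$, placing $p$ outside $\overline{\Omega}$.

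The reverse inclusion $\overline{\Lambda} \subset \partial_\infty \Omega$ is the main obstacle and requires a quantitative comparison of the positive and negative contributions to $u$. For $p\in\Lambda$, take $x_\epsilon=(1-\epsilon)p$; a direct expansion gives $1-|x_\epsilon|^2\sim 2\epsilon$ and $|z-x_\epsilon|^2=|z-p|^2(1-\epsilon)+\epsilon^2$. Writing $\Lambda=\Lambda^+\sqcup\Lambda^-$ according as $|z-p|<\sqrt{2\epsilon}$ (so $u_z(x_\epsilon)>0$) or $|z-p|\ge\sqrt{2\epsilon}$ (so $u_z(x_\epsilon)\le 0$), I would apply condition (iii) to the subset $\Lambda\cap B_{\epsilon/2}(p)\subset\Lambda^+$, on which $P_z(x_\epsilon)\gtrsim 1/\epsilon$, to obtain $I_+(x_\epsilon)\ge C_1\,\epsilon^{\,s-(n-1)/2}\ln(1/\epsilon)$. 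For $\Lambda^-$, decompose into dyadic annuli $A_j=\Lambda\cap(B_{2^{j+1}\sqrt{2\epsilon}}(p)\setminus B_{2^j\sqrt{2\epsilon}}(p))$, bound $H^s(A_j)\le K(\Lambda)(2^{j+1}\sqrt{2\epsilon})^s$ by condition (iv), and sum the resulting convergent series in $2^{-j((n-1)-s)}$ (finite because $s<n-1$) to obtain $|I_-(x_\epsilon)|\le C_2\,\epsilon^{s/2}$. Since $s>(n-1)/2$ implies $s/2>s-(n-1)/2$, the positive term dominates as $\epsilon\to 0^+$, giving $u(x_\epsilon)>0$; hence $x_\epsilon\in\Omega$ and $p\in\overline{\Omega}$. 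Points $p\in\overline{\Lambda}\setminus\Lambda$ follow by approximating with $p_n\in\Lambda$ and choosing $x_n\in\Omega$ with $|x_n-p_n|\to 0$.
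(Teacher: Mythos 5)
Your proposal is correct and follows essentially the same route as the paper's proof: boundedness directly from \eqref{boundForHausdorffIntegral}, harmonicity by passing the Laplacian through the integral (using $H^s(\Lambda)<\infty$), negativity of $\ln\bigl[(1-|x|^2)/|z-x|^2\bigr]$ near $\partial B\setminus\overline{\Lambda}$ for the inclusion $\partial_\infty\Omega\subset\overline{\Lambda}$, and for the reverse inclusion a dyadic decomposition in which condition (iii) gives a positive contribution of order $\epsilon^{s-\frac{n-1}{2}}\ln(1/\epsilon)$ while condition (iv) bounds the negative contribution by order $\epsilon^{s/2}$ up to a logarithm (the paper splits the annuli at index $(k-2)/2$, i.e.\ at scale $\sqrt{\delta}$, rather than exactly at the sign change, but this is the same estimate). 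One small slip worth fixing: the decisive inequality $s/2>s-\frac{n-1}{2}$ is equivalent to $s<n-1$, not to $s>\frac{n-1}{2}$ as you state, and your bound on $I_-$ should retain a factor $\ln(1/\epsilon)$; both points are harmless since $s<n-1$ holds for a truly $s$-dimensional set and $\epsilon^{s/2}\ln(1/\epsilon)=o\bigl(\epsilon^{s-\frac{n-1}{2}}\ln(1/\epsilon)\bigr)$.
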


From \eqref{boundForHausdorffIntegral}, $u$ is well defined and bounded by $M K(\Lambda)$. Moreover, since $H^{s}(\Lambda)$ is finite 
and $u_z(x)$ is $C^{\infty}$ in $z$ and $x$, we have that $u$ is $C^{\infty}$ and it is a $\lambda_1$-harmonic function. 

Let $\Omega =\{ x \in B \; : \; u(x) >0 \}$. First we prove that $\partial_{\infty}\Omega \subset \overline{\Lambda}$. For $z_0 \in \partial B \backslash \overline{\Lambda}$
and a positive $r < \frac{1}{2} d (z_0, \overline{\Lambda})$ that will be chosen later, let $V$ be the ball centered at $z_0$ with radius $r$.
Hence, 
$$| z - x | > \frac{1}{2}d(z_0, \overline{\Lambda}) \quad {\rm for \; any } \; \; x \in V \cap B \; \; {\rm and }\;\; z \in \Lambda. $$
Naming $\bar{d}=d(z_0,\overline{\Lambda})$ and using that $|x| > 1-r$ for $x \in V \cap B$,  we have
$$ \ln\left[ \frac{1-|x|^2}{|z-x|^2} \right] < \ln (1 - (1-r)^2) - \ln \left( \frac{\bar{d}^2}{4}\right) = \ln \left( \frac{8r - r^2}{\bar{d}^2} \right) ,$$
for $z \in \Lambda$. If $r < \bar{d}^2/8$, the right-hand side of this inequality is negative and, therefore, 
$$u_z(x) < 0 \quad {\rm for}  \; \; x \in V \cap B \; \;  {\rm and} \; \; z\in \Lambda.$$
Then $u(x) < 0$ for $x \in V \cap B$. As a consequence $z_0 \not\in \partial_{\infty}\Omega$ which proves that $\partial_{\infty}\Omega \subset \overline{\Lambda}$.

\

We prove now that $\partial_{\infty}\Omega \supset \overline{\Lambda}$. For $x_0 \in \overline{\Lambda}$ let $x=(1-\delta) x_0$, where $ 0 < \delta < 1/8$ will be chosen later.
Observe that $\delta = 1-|x|$. Consider the sets $\Lambda_i$ defined in Claim 1 for $i\in \{1, \dots, k\}$, where $2 < 2^k \delta \le 4$.
\\ \\
\noindent If $z \in \Lambda_i$ for $i < (k-2)/2$, we have that $|z-x|< 2^i \delta$ and the definition of $k$ implies that $2^{2i} \delta < 1$. Then
\begin{equation}
\ln \left[ \frac{1-|x|^2}{|z-x|^2} \right] \ge \ln \left[ \frac{(1-|x|)(1+|x|)}{|z-x|^2} \right] \ge \ln \left( \frac{\delta }{2^{2i}\delta^2} \right)=\ln \left( \frac{1}{2^{2i}\delta} \right) >0.
\label{eqlnOnCiByBelow}
\end{equation}
Hence $u_z(x) > 0$ and therefore
\begin{equation}
\int_{\Lambda_i} u_z(x) \; dH^s(z) \ge 0 \quad {\rm for} \quad i < \frac{k-2}{2}.
\label{eqonCiNEW} 
\end{equation}
We can improve this estimative in $\Lambda_1$. Indeed, if $z \in \Lambda_1$, then $|z-x| < 2 \delta$. Using this and \eqref{eqlnOnCiByBelow},
\begin{equation}
 u_z(x)= \left[\frac{1-|x|^2}{|z-x|^2} \right]^{\frac{n-1}{2}}  \ln \left[ \frac{1-|x|^2}{|z-x|^2} \right] \ge \left[\frac{1}{4\delta} \right]^{\frac{n-1}{2}}  \ln \left( \frac{1}{4\delta} \right)
\label{eqEstForUzOnC1}
\end{equation} 
Observe now that for $z \in \Lambda \cap B_{\delta}(x_0)$, we get $|z - x_0| < \delta$ and, therefore $|z-x| \le |z-x_0|+|x_0 -x | < 2 \delta$.
On the other hand $|z-x| \ge \delta$. Hence 
$$\Lambda \cap B_{\delta}(x_0) \subset \Lambda_1.$$
Condition {\it (iii)} of Definition \ref{def-sconj} implies that there exist $\delta_1 >0$ and $D >0$ such that
$$ H^{s}(\Lambda_1) \ge H^{s}(\Lambda \cap B_{\delta}(x_0)) > D \delta^{s},$$
for $\delta \le \delta_1$. From this and \eqref{eqEstForUzOnC1} we conclude, for $\delta \le \delta_1$, that 
\begin{equation}
\int_{\Lambda_1} u_z(x) \; dH^s(z) \ge \left[\frac{1}{4\delta} \right]^{\frac{n-1}{2}}  \ln \left( \frac{1}{4\delta} \right) D \delta^{s} = \frac{D}{4^{n-1}} \delta^{s -\frac{n-1}{2}} \ln \left(\frac{1}{4\delta}\right) .
\label{eqEstForIntOnC1} 
\end{equation} 
Now we consider the integral on $\Lambda_i$ for $i \ge (k-2)/2$. For that, remind \eqref{eqIntegralEstimateForModulusOfuzOnCi} from which we obtain
$$ \int_{\Lambda_i} u_z(x) \; dH^s(z) \ge - \int_{\Lambda_i} |u_z(x)|\, dH^s(z)  \ge - K_0 2^{i(s-(n-1))} \delta^{s-\frac{n-1}{2}} \ln \left( \frac{128}{\delta^3} \right). $$
Hence, denoting $E = \displaystyle \bigcup_{\frac{k-2}{2} \le i \le k}\Lambda_i$ and using that $s -(n-1) <0$, we have
\begin{align*} \int_{E} u_z(x) \; dH^s(z) &\ge - K_0  \delta^{s-\frac{n-1}{2}} \ln \left( \frac{128}{\delta^3} \right) \sum_{\frac{k-2}{2} \le i \le k } 2^{i(s-(n-1))} \\[5pt]   &\ge - K_0  \delta^{s-\frac{n-1}{2}} \ln \left( \frac{128}{\delta^3} \right) \frac{2^{\frac{k-2}{2}(s-(n-1))}}{1-2^{(s-(n-1))}} = -K_2 \delta^{s-\frac{n-1}{2}} \ln \left( \frac{128}{\delta^3} \right) 2^{\frac{k}{2}(s-(n-1))},
\end{align*}
where $K_2= K_0 \frac{2^{(n-1)-s}}{1-2^{(s-(n-1))}}$. Since $2^k > \frac{2}{\delta}$,
\begin{equation}
 \int_{E} u_z(x) \; dH^s(z) \ge - K_2  \delta^{s-\frac{n-1}{2}} \left( \frac{\delta}{2} \right)^{\frac{(n-1)-s}{2}} \ln \left( \frac{128}{\delta^3} \right).
\label{eqEstForIntOnCiUnionOrE}
\end{equation}
From \eqref{eqonCiNEW}, \eqref{eqEstForIntOnC1} and \eqref{eqEstForIntOnCiUnionOrE}, we get
\begin{equation}
 \int_\Lambda u_z(x) d H^s(z) \ge  \frac{D}{4^{n-1}} \delta^{s -\frac{n-1}{2}} \ln \left(\frac{1}{4\delta}\right) - K_2  \delta^{s-\frac{n-1}{2}} \left( \frac{\delta}{2} \right)^{\frac{(n-1)-s}{2}} \ln \left( \frac{128}{\delta^3} \right) ,
\label{eqonCCC} 
\end{equation}
for $\delta \le \min\{ 1/8, \delta_1 \}$. The right-hand side is positive for $\delta$ close to zero, since the negative term has the extra factor $(\delta/2)^{\frac{(n-1)-s}{2}}$ that converges to zero. That is, there exists
$\delta_2 >0$ such that $u((1-\delta)x_0) > 0$ for $\delta \le \delta_2$. Then $x_0 \in \partial_{\infty} \Omega$, proving the result.

The third and last claim required to prove Theorem \ref{theo-exist} is about the boundedness of the function.

\begin{claim}\label{lem-limitada}
If $\Lambda$ is a truly $s-$dimension set, then the $\lambda_1$-harmonic function $u$ given by \eqref{uDefinitionByIntegral} is positive in some $\Omega$ such that
$\partial_{\infty} \Omega=\overline{\Lambda}$ and 
$$ |u(x)| \le M_0 (d+1) \, e^{-\left(s - \frac{(n-1)}{2}\right)d}, $$
where $d=d(x,0)$ and $M_0$ is a positive constant that depends only on $K(\Lambda)$, $n$ and $s.$
\end{claim}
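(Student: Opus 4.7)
The positivity of $u$ on some $\Omega$ with $\partial_\infty \Omega = \overline{\Lambda}$ is already established by Claim \ref{lem-limitacao}, so the only new content is the pointwise decay bound. My plan is to reuse the estimate \eqref{IntegralEstimateForModulusOfuz} from the proof of Claim \ref{lem-iv_implica_limitada}, namely
$$|u(x)| \le \int_\Lambda |u_z(x)|\, dH^s(z) \le K_1 \,\delta^{s-\frac{n-1}{2}} \ln\!\left(\frac{128}{\delta^3}\right), \qquad \delta = 1-|x|,$$
and translate the Euclidean quantity $\delta$ into the hyperbolic distance $d=d(x,0)$ via the ball-model identity \eqref{eq-d_aozero}.

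From \eqref{eq-d_aozero}, $e^d = (1+|x|)/(1-|x|) = (2-\delta)/\delta$, so $\delta = 2/(e^d+1)$. Two elementary consequences will do all the work: first, $\delta \le 2e^{-d}$, and second, since $(e^d+1)/2 \le e^d$ for $d\ge 0$,
$$\ln(1/\delta) = \ln\!\left(\frac{e^d+1}{2}\right) \le d.$$
Plugging these in,
$$\delta^{s-\frac{n-1}{2}} \le 2^{s-\frac{n-1}{2}} e^{-(s-\frac{n-1}{2})d}, \qquad \ln(128/\delta^3) \le \ln 128 + 3d \le C_5(d+1),$$
where the first inequality uses $s > (n-1)/2$ so the exponent is positive and the function $t\mapsto t^{s-\frac{n-1}{2}}$ is monotone. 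Substituting into the bound for $|u(x)|$ gives
$$|u(x)| \le K_1\, 2^{s-\frac{n-1}{2}}\, C_5\,(d+1)\, e^{-(s-\frac{n-1}{2})d} =: M_0(d+1)\, e^{-(s-\frac{n-1}{2})d},$$
and $M_0$ depends only on the constant $K_1$ (which in turn depends on $n$, $s$, and $K(\Lambda)$), completing the proof.

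There is essentially no obstacle here: once Claim \ref{lem-iv_implica_limitada} is available, the whole argument is a two-line change of variable. The only subtle point worth flagging is that the bound \eqref{IntegralEstimateForModulusOfuz} was derived under the constraint $\delta\in(0,1)$, so for points with $|x|$ bounded away from $1$ (equivalently, $d$ bounded) one may need to absorb a harmless constant into $M_0$; this is immediate because the right-hand side $M_0(d+1)e^{-(s-\frac{n-1}{2})d}$ is bounded below on any compact $d$-range, while $|u|$ is globally bounded by Claim \ref{lem-limitacao}.
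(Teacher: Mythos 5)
Your proof is correct and follows essentially the same route as the paper: both apply the bound \eqref{IntegralEstimateForModulusOfuz}, use $\delta = 2/(1+e^d)$ from \eqref{eq-d_aozero} to get $e^{-d} \le \delta \le 2e^{-d}$, and then bound the power factor by $\delta \le 2e^{-d}$ and the logarithm by $\ln(128/\delta^3) \le \ln 128 + 3d$. Your added remark about the compact range of $d$ is a harmless refinement the paper leaves implicit.
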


Inequality \eqref{IntegralEstimateForModulusOfuz} implies that
$$|u(x)| \le K_1 \delta^{s-\frac{n-1}{2}} \ln \left( \frac{128}{\delta^3} \right),$$
for $\delta=1-|x|.$ From \eqref{eq-d_aozero}, we have
$ \delta= 2/(1+ e^d)$ and then $e^{-d} \le \delta \le 2 e^{-d}.$ Therefore
$$|u(x)|  \le K_1 (2e^{-d}) ^{s-\frac{n-1}{2}} \ln \left( \frac{128}{(e^{-d})^3} \right), $$
proving the result.

\end{proof}

\begin{remark}
For $x_0 \in \overline{\Lambda}$ and $x=(1-\delta)x_0$, inequality \eqref{eqonCCC} implies
$$ u(x) \ge C \delta^{s -\frac{n-1}{2}} \ln \left(\frac{1}{\delta}\right), $$
for small $\delta$, where $C$ is some positive constant. Taking $\delta=\frac{2}{1+ e^d}$ as we did before, we conclude that 
$$u(x) \ge \tilde{M}  \,d \, e^{-\left(s - \frac{(n-1)}{2}\right)d}, $$
where the positive constant $\tilde{M}$ depends on $K(\Lambda)$ (given by condition (iii) from definition \ref{def-sconj}), $x_0$, and the set $\Lambda$. From this and the previous claim, we get
$$   \tilde{M}  \,d \, e^{-\left(s - \frac{(n-1)}{2}\right)d} \le u(x) \le M_0 (d+1) \, e^{-\left(s - \frac{(n-1)}{2}\right)d},$$
for $x=(1-\delta)\, x_0$ as $\delta$ is close to zero.
\end{remark}

For $\Omega\subset \mathbb{H}^n$ such that $\partial_\infty \Omega$ has $(n-1)$ Hausdorff dimension, we consider specific subsets. For instance, if 
$\Omega=\mathbb{H}^n$, we can find a radially symmetric $\lambda$-harmonic function $u$ in $\Omega$ by solving an hypergeometric equation. Such a solution is bounded and has exponential decay at infinity (see \cite{GO}) for $\lambda \in (0,\lambda_1]$.  
Moreover $u$ does not change sign, otherwise $\lambda$ would be the first eigenvalue of some ball. Another example can be obtained by taking $u=u_1 - u_2$, where $u_1$ and $u_2$ are radially symmetric $\lambda$-harmonic functions with different points of symmetry, $o_1$ and $o_2$. If $u_1(o_1) = u(o_2) > 0$, then $u$ is positive in some geodesic hyperball $\mathcal{I}$ and negative outside $\mathcal{I}$.

In some special situations, we can find $\lambda_1-$harmonic with a more explicit representation. For instance, consider a $\lambda_1$-harmonic function $u$ presented in Section \ref{sec-nonex} that depends only on $d$, the distance (with sign) to some hypersphere $\partial \mathcal{I}$. If $n=5$, then $u(d)$ satisfies  \eqref{eq_edo_variavel_d} with $\lambda=\lambda_1=\frac{(5-1)^2}{4}=4$. As an example of such a solution, we can take
$$ u(d) = \frac{\cosh d - d \sinh d}{\cosh^3 d}$$
that is an even function with two zeros $d_1$ and $-d_1$, positive in $(-d_1,d_1)$ and negative elsewhere. Then $u$ is a positive $\lambda_1$-harmonic function in $\Omega$, that is the hyperannulus bounded by the two hyperspheres that have distance $d_1$ to $\partial \mathcal{I}$. The asymptotic boundary of $\Omega$ is homeomorphic to $\mathbb{S}^3$ that has dimension $3$.

\subsection{Cantor like subsets of $\partial_\infty \mathbb{H}^n$}

Given  $(n-1)/2<s<n-1,$ $s$ is the Hausdorff dimension of $\partial_\infty\Omega$ for a domain $\Omega$ that admits a bounded $\lambda_1$-harmonic function.
We exhibit truly $s-$dimensional sets in $\partial_\infty\mathbb{H}^n$ and then the result follows from Theorem \ref{theo-exist}.

Let us construct Cantor like sets in $\mathbb{S}^1\subset \partial B.$ Given integers $1\leq l<m,$ the Cantor like set $K_{l,m}$ is constructed as follows:

We introduce coordinates in $\mathbb{S}^1$ and think of it as the interval $[0,1]$ with 0 and 1 identified. 

Let $K_0=[0,1].$
Let $$K_1=\left[0,\frac{1}{m}\right]\cup \left[\frac{1}{l},\frac{1}{l}+\frac{1}{m}\right] \cup \left[\frac{2}{l},\frac{2}{l}+\frac{1}{m}\right] \cup ... \cup \left[\frac{l-1}{l},\frac{l-1}{l}+\frac{1}{m}\right]$$
be a union of $l$ intervals of length $1/m$ equally distributed in $\mathbb{S}^1.$
Then $K_n$ is obtained inductively from $K_{n-1},$ a union of $l^{n-1}$ intervals  of length $1/m^{n-1},$ by taking from each interval of $K_{n-1},$ $l$ intervals of length $1/m^n,$ equally distributed in the interval. 

The generalized Cantor set $K_{l,m}$ is then $$K_{l,m}=\bigcap_{n=1}^{\infty} K_n.$$

It is well known that $K_{l,m}$ has Hausdorff dimension $s$ for $s$ such that $\frac{l}{m^s}=1,$ hence $s=\frac{\ln l}{\ln m}.$ Moreover, the $s-$dimensional Hausdorff measure of $K_{l,m}$ is

$$H^s\left(K_{l,m}\right)=\omega_s \lim_{n\rightarrow\infty}\sum_{i=1}^{l^n}\left(\frac{1}{m^n}\right)^s=\omega_s<\infty,$$
if we think of $K_{l,m}$ as a subset of $[0,1].$ (The constant $\omega_s$ is used in the definition of the Hausdorff measure (see \cite{Fe})
and represents the volume of the unit $s$-dimensional ball when $s$ is a positive integer.)
Since we may identify $\mathbb{S}^1\subset \partial B$ with $[0,1],$ $H^s(K_{l,m})$ is finite as a subset of $\partial_\infty\mathbb{H}^n.$
Besides, if instead of taking intervals of length $1/m$ of the size of the intervals in the step before, one takes intervals of length $a/m$ for $a\in (0,1),$ then the Hausdorff dimension of the Cantor set would be $$s=\frac{\ln l}{\ln m-\ln a}$$
and the $s-$dimensional measure would also be $\omega_s<\infty.$ 
Therefore, for any $s\in (0, 1),$ there is a Cantor like set of Hausdorff dimension $s$ in $\mathbb{S}^1.$

\begin{proposition}
Let $\Lambda$ denote $K_{l,m}$ of dimension $s=\frac{\ln l}{\ln m}.$
For any $z \in \Lambda,$
$$\frac{\omega_s r^s}{l^s} \le H^{s}(\Lambda \cap B_r(z)) \le w_s(2m)^s r^{s}$$
and therefore $\Lambda\subset \partial B$ is a truly $s-$dimensional subset of $\partial B.$
\end{proposition}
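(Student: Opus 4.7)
The plan is to exploit the explicit self-similar structure of $K_{l,m}$ at every dyadic scale $1/m^n$. The key observation is that at step $n$ the set $K_n$ consists of $l^n$ intervals of length $1/m^n$, and because each such interval $I$ contains a similar copy of $K_{l,m}$ scaled by the factor $1/m^n$, one has $H^s(K_{l,m}\cap I)=\omega_s/l^n$. Since $s=\ln l/\ln m$ gives $m^s=l$, this produces the clean correspondence $\omega_s/l^n=\omega_s(1/m^n)^s$ between the Hausdorff measure inside a $K_n$-interval and the $s$-th power of its length. I will use this to compare $H^s(\Lambda\cap B_r(z))$ with $r^s$ by adapting the level $n$ to the radius $r$.

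For the upper bound I would choose $n$ so that $1/m^n\le r<1/m^{n-1}$; then $B_r(z)$ is an arc of length $2r<2m/m^n$, so it intersects at most order $m$ of the $K_n$-intervals. Bounding $H^s(\Lambda\cap B_r(z))$ by summing $\omega_s/l^n$ over the intersected intervals and converting via $r^s\ge 1/l^n$ together with $m^s=l$ (so that $(2m)^s=2^s l$), the combined estimate takes the form $H^s(\Lambda\cap B_r(z))\le\omega_s(2m)^s r^s$, after handling the boundary cases in the interval count.

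For the lower bound I would pick $n$ such that the $K_n$-interval $I_n(z)$ containing $z$ fits inside $B_r(z)$. This holds as soon as $r$ exceeds the length $1/m^n$ of $I_n(z)$, since $I_n(z)$ has diameter $1/m^n$ and contains $z$. Choosing the largest such $n$, monotonicity gives $H^s(\Lambda\cap B_r(z))\ge H^s(K_{l,m}\cap I_n(z))=\omega_s/l^n$. Rewriting $1/l^n\ge r^s/l^s$ (which holds when $r\le l/m^n$, i.e.\ in the lower portion of the range $[1/m^n,1/m^{n-1})$) yields $H^s(\Lambda\cap B_r(z))\ge\omega_s r^s/l^s$. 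For $r$ falling into the upper portion of the range, a slight refinement using several of the $K_{n+1}$-sub-intervals of $I_n(z)$ (which are equally spaced inside $I_n(z)$ with consecutive starting points at distance $1/(lm^n)$) recovers the same $r^s$ scaling.

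With both bounds in hand, condition iv) of Definition \ref{def-sconj} follows directly from the upper bound with constant $K(\Lambda)=\omega_s(2m)^s$, and condition iii) follows from the lower bound since $\liminf_{r\to 0}H^s(\Lambda\cap B_r(p_0))/r^s\ge\omega_s/l^s>0$ uniformly in $p_0\in\Lambda$, completing the verification that $\Lambda$ is truly $s$-dimensional. The main obstacle I expect is the bookkeeping for radii that fall between the ``nice'' dyadic scales $1/m^n$ and $l/m^n$: there the lower bound requires $B_r(z)$ to contain more than one $K_{n+1}$-sub-interval, which one verifies using the equal spacing inside a common parent, and the upper bound requires careful counting of how many $K_n$-intervals an arc of length $2r$ can intersect without double-counting the gaps between them.
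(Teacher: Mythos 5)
Your plan is the same as the paper's: exploit the exact self-similarity $H^s(\Lambda\cap I)=\omega_s/l^n=\omega_s(1/m^n)^s$ for each $K_n$-interval $I$, match the scale $n$ to the radius $r$ via $m^s=l$, and count intervals. In substance this works: the containment $I_n(z)\subset B_r(z)$ for $r\ge 1/m^n$ gives the lower density bound, the interval count gives the upper one, and that is all that conditions iii) and iv) of Definition \ref{def-sconj} require, so the conclusion that $\Lambda$ is truly $s$-dimensional follows. (The paper's own write-up is in fact looser than yours on these points: it chooses $n$ by $1/l^{n+1}\le r<1/l^n$ and asserts that $B_r(z)$ contains a full interval of $K_n$, and its displayed constants do not follow from its own counting; neither do yours exactly, e.g.\ an arc of length $2r$ can meet up to about $3l$ of the $K_n$-intervals rather than $2^s l$, but any constant of the form $C(l,m,s)r^s$ serves for condition iv).)

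The one step of your plan that would genuinely fail is the deferred ``slight refinement'' intended to recover the constant $\omega_s/l^s$ when $r$ lies in the upper portion $(l/m^n,1/m^{n-1})$ of a scale range. No refinement can succeed there, because the displayed lower bound is false with that constant. Take $l=2$, $m=10$ (so $10^s=2$), $z=0$ and $r=4\cdot 10^{-n}$ with $n\ge 2$: the $K_n$-interval containing $0$ is $[0,10^{-n}]$, the next piece of $\Lambda$ to its right starts at $5\cdot 10^{-n}>r$, and the nearest piece of $\Lambda$ on the other side of $0$ is at distance at least $0.4$; hence $H^s(\Lambda\cap B_r(0))=\omega_s 10^{-ns}=\omega_s2^{-n}$, while $\omega_s r^s/l^s=\omega_s(2\cdot 10^{-n})^s=2^s\,\omega_s 2^{-n}>\omega_s2^{-n}$. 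So do not chase that constant. It is also unnecessary: for $1/m^n\le r<1/m^{n-1}$ the single containment already gives $H^s(\Lambda\cap B_r(z))\ge \omega_s/l^n=\omega_s(1/m^n)^s>\omega_s(r/m)^s=\omega_s r^s/l$, uniformly in $z\in\Lambda$, and the positive constant $\omega_s/l$ (in place of $\omega_s/l^s$) is all that condition iii) and the $\liminf$ in the proposition's intended use demand.
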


\begin{proof}
Given $z \in \Lambda,$ and $r>0,$ take $n\in \mathbb{N}$ such that 
$$\frac{1}{l^{n+1}}\leq r<\frac{1}{l^{n}}.$$

Observe that $B_r(z)$ contains, for some $i\in \mathbb{N},$ $0<i<n-1,$ $I_n=[i/l^n,(i+1)/l^n]$ and therefore $\Lambda\cap B_r(z)$ contains a contracted copy of $\Lambda,$ $K_{n,i}=i/l^n+1/l^n\times \Lambda$ which is part of $K_n.$ Hence
$$H^s(\Lambda\cap B_r(z))\geq H^s(\Lambda\cap I_n)=(1/l^n)^sH^s(\Lambda)\geq \frac{\omega_s r^s}{l^s},$$
concluding the proof of the first inequality.

The second one is trivial if $r> 1/m.$
If $r<1/m,$ take $n\in \mathbb{N}$ such that
\begin{equation}
\frac{1}{m^{n+1}}\leq r<\frac{1}{m^{n}}.
\end{equation}
Notice that $\Lambda$ is contained in $K_n$ and $B_r(z)$ intercepts $K_n$ at most in two consecutive subintervals of $K_{n}$ of length $1/l^n.$ Therefore, 
$$H^s(\Lambda\cap B_r(z)) \leq H^s(K_n\cap B_r(z))\leq \frac{2H^s(\Lambda)}{l^n}\leq H^s(\Lambda)(2rm)^s.$$
\end{proof}

For any $0<s<1,$ we exhibited a truly $s-$dimensional subset $\partial B$ that is, from Theorem \ref{theo-exist}, the boundary of a domain that admits a bounded $\lambda_1-$harmonic function if $s>(n-1)/2.$
The construction of the asymptotic boundary of a domain where a bounded eigenfunction exists ends by observing that if $k$ is an integer such $(n-1)/2<k+s<n-1,$ then a Cantor like set $\Lambda$ of Hausdorff dimension $k+s$ can be constructed as follows:

We may think of $\mathbb{S}^{n-1}$ as a subset of $\mathbb{R}^{n},$
$\mathbb{S}^{n-1}=\{(x_1,...,x_{n})|x_1^2+x_2^2+...+x_{n}^2=1\}$ and it contains 
$\mathbb{S}^1=\{(0,...0,x_{n-1} ,x_{n})|x_{n-1}^2+x_{n}^2=1\}.$ Let $K_{l,m}$ be the Cantor set contained in $\mathbb{S}^1$ and then for $0<\varepsilon<1$ let $$
\begin{array}{l}
X=\{(x_1,...x_k,0,...,0,x_{n-1}, x_{n})|
\frac{1}{1-(x_1^2+x_2^2+...+x_k^2)}(x_{n-1},x_{n})\in K_{l,m}\\
\text{ and }(x_1^2+x_2^2+...+x_k^2)\in [-\varepsilon,\varepsilon]\}.
\end{array}
$$

It is clear that $X$ is diffeomorphic to $K_{l,m}\times [-\varepsilon,\varepsilon]^k$ which has Hausdorff dimension $s+k$ since any  covering $\{A_t\}_{t\in I}$ of $K_{l,m}$ induces a covering of $X$ by taking $\{A_t\times [-\varepsilon,\varepsilon]^k\}_{t\in I}.$ Besides, $X$ is also a truly $(k+s)-$dimensional set and therefore, it is the asymptotic boundary of a domain $\Omega$ that admits a bounded positive $\lambda_1-$harmonic function. This can be summarized in the following result

\begin{theorem}
Given  $s\in\left((n-1)/2,n-1\right),$ $s$ is the Hausdorff dimension of $\partial_\infty\Omega$ for a domain $\Omega$ that admits a bounded $\lambda_1$-harmonic function.
\end{theorem}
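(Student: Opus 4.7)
The plan is to reduce the statement to Theorem \ref{theo-exist}: it suffices to exhibit, for each target $s\in((n-1)/2,n-1)$, a truly $s$-dimensional subset $X\subset\partial_\infty\mathbb{H}^n$, because then Theorem \ref{theo-exist} produces a domain $\Omega$ with $\partial_\infty\Omega=\overline{X}$ (hence of Hausdorff dimension $s$) carrying a bounded $\lambda_1$-harmonic function.

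If $s$ is not an integer, I would write $s=k+t$ with $k=\lfloor s\rfloor\in\{0,1,\dots,n-2\}$ and $t\in(0,1)$, then choose integers $l<m$ and a scale parameter $a\in(0,1)$ such that $t=\ln l/(\ln m-\ln a)$. Form the scaled Cantor set $K=K_{l,m,a}\subset\mathbb{S}^1$; the argument of the preceding proposition adapts verbatim and shows that $K$ satisfies the Ahlfors-type estimate $c_1 r^t\le H^t(K\cap B_r(z))\le c_2 r^t$ for all $z\in K$ and small $r>0$, so $K$ is truly $t$-dimensional. Embedding $K\times[-\varepsilon,\varepsilon]^k$ into $\partial B\simeq\mathbb{S}^{n-1}$ in the bi-Lipschitz fashion described just before the theorem statement produces the candidate set $X$ of Hausdorff dimension $k+t=s$.

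The main obstacle I anticipate is verifying the truly $s$-dimensional property for this product $X$, since the Cantor proposition is stated only for a set in $\mathbb{S}^1$. The finiteness $H^s(X)<\infty$ and the upper bound (iv) of Definition \ref{def-sconj} follow from the standard product inequality for Hausdorff measures combined with the upper bound on $K$ and the Euclidean bound on the $k$-cube. For the lower bound (iii) I would argue that for every $p=(x',y)\in X$ the ball $B_r(p)$ contains a product of a $k$-cube of side $\sim r$ around $x'$ with a one-dimensional ball of radius $\sim r$ around $y$, and multiplying the lower Ahlfors estimates on each factor yields $H^s(X\cap B_r(p))\gtrsim r^{k}\cdot r^{t}=r^{s}$, as required. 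For integer $s$, where the Cantor construction degenerates, I would instead take $X$ to be a smooth round $s$-dimensional sub-sphere of $\mathbb{S}^{n-1}$, which is automatically Ahlfors $s$-regular and thus truly $s$-dimensional; Theorem \ref{theo-exist} then furnishes the required $\Omega$ in every case.
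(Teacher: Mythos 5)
Your proposal is correct and follows essentially the same route as the paper: exhibit a truly $s$-dimensional set (in the sense of Definition \ref{def-sconj}) as a Cantor set $K_{l,m,a}\subset\mathbb{S}^1$ crossed with a $k$-cube, verify the Ahlfors-type bounds, and invoke Theorem \ref{theo-exist}. Your two additions --- spelling out why the product set inherits conditions (iii) and (iv), and covering integer values of $s$ (which the paper's construction $k+t$ with $t\in(0,1)$ never produces) by a round $s$-dimensional subsphere --- are welcome completions of details the paper leaves implicit.
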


\end{document}